\DeclareRobustCommand{\shortto}{%
  \mathrel{\mathpalette\short@to\relax}%
}
\newcommand{\short@to}[2]{%
  \mkern2mu
  \clipbox{{.5\width} 0 0 0}{$\m@th#1\vphantom{+}{\rightarrow}$}%
  }
\newtheorem{theorem}{Theorem}[section]
\newtheorem{lemma}[theorem]{Lemma}
\newtheorem{proposition}[theorem]{Proposition}
\theoremstyle{definition}
\theoremstyle{remark}
\newtheorem{remark}[theorem]{Remark}
\numberwithin{equation}{section}
\numberwithin{table}{section}
\numberwithin{figure}{section}
\newcommand\Ab{\mathbf{A}}
\newcommand\Xb{\mathbf{X}}
\newcommand\laplace{S}
\newcommand\DN{\mathrm{DN}}
\renewcommand\i{\mathrm{i}}
\DeclareMathOperator{\Mor}{Mor}
\DeclareMathOperator{\Int}{Int}
\DeclareMathOperator{\curl}{curl}
\DeclareMathOperator{\Def}{Def}
\let\phi=\varphi
\let\epsilon=\varepsilon
\title[Spectral flow for pair compatible equipartitions]{Spectral flow for\\pair compatible equipartitions}
\author{Bernard Helffer}
\address[Bernard Helffer]{ Laboratoire de Math\'ematiques Jean Leray,
Universit\'e de Nantes, 44332 Nantes,  France.}
\email{Bernard.Helffer@math.univ-nantes.fr}
\author{Mikael Persson Sundqvist}
\address[Mikael Persson Sundqvist]{Lund University, Department of Mathematical 
Sciences, Box 118, 221\ 00 Lund, Sweden.}
\email{mikael.persson\_sundqvist@math.lth.se}
\subjclass[2010]{35P05}
\keywords{Spectral flow, Nodal deficiency, Dirichlet-to-Neumann operators, Aharonov--Bohm Hamiltonians}
\begin{document}

\begin{abstract}
We show that a recent spectral flow approach  proposed by Berkolaiko--Cox--Marzuola   for analyzing the nodal deficiency  of the nodal partition associated to an eigenfunction  can be extended to  more general 
partitions. To be more precise, we work with spectral
equipartitions that satisfy a pair compatible condition. Nodal partitions and
spectral minimal partitions are examples of such partitions.

Along the way, we discuss different approaches to the Dirichlet-to-Neumann 
operators: via Aharonov--Bohm operators, 
via a double covering argument, and via a slitting of the domain.
\end{abstract}

\maketitle

\section{Introduction}

\subsection{Main goals}
We consider the Dirichlet Laplacian $\laplace_\Omega=-\Delta$ in a bounded
domain $\Omega\subset \mathbb R^2$ (and subdomains of $\Omega$), where 
$\partial\Omega$ is assumed to be piecewise differentiable.

We would like to analyze the  relations between the nodal domains of the real-valued  eigenfunctions of this Laplacian and the partitions $\mathcal D$ of $\Omega$ by $k$ open sets $\{D_i\}_{i=1}^k$, which are spectral equipartitions in the sense that in each $D_i$'s the ground state energy $\lambda_1(D_i)$ of the Dirichlet realization of the Laplacian $\laplace_{D_i}$ in $ D_i$ is the same. In addition we will consider spectral equipartitions which satisfy a pair compatibility condition (PCC) for any pair of neighbouring $D_i$'s,  i.e. for any pair of neighbors $D_i,D_j$ in $\mathcal D$, there is a linear combination of the ground states in $D_i$ and $D_j$ which is an eigenfunction of the Dirichlet problem in $\Int(\overline{D_i\cup D_j})$.

Nodal partitions and minimal partitions are typical examples of these PCC-equipartitions but a difficult question is to recognize which  PCC-equipartitions are minimal. This problem has been solved in the bipartite case (which corresponds to the Courant sharp situation) but the problem remains open in the general case.

Our main goal is to extend the construction and analysis of spectral flow and Dirichlet-to-Neumann operators, which was done for nodal partitions in~\cite{BeCoMa}, to spectral equipartitions that satisfy the PCC. We describe briefly the construction for nodal domains first, and return to our setting in Section~\ref{sec:ourresults}.

\subsection{The spectral flow construction by Berkolaiko--Cox--Marzuola}
We describe shortly the result in~\cite{BeCoMa} that we want to generalize,  together with their construction.

Let $\Omega\subset\mathbb R^2$ and let $\lambda_*$ be some eigenvalue of the Dirichlet Laplacian $\laplace_\Omega=-\Delta$, with corresponding eigenfunction $\phi_*$. We denote by $\Gamma$ the nodal set of $\phi_*$ inside $\Omega$, i.e. $\Gamma=\{x\in\Omega~:~\phi_*(x)=0\}$, and by $k=\mu(\phi_*)$  the number of nodal domains of $\phi_*$, i.e. the number of connected components of the set $\{x\in\Omega~:~\phi_*(x)\neq 0\}$. We denote by $D_i$ ($i= 1,\dots, k$) the nodal domains of $\phi_*$. Also, let $k_*$ be the label of the eigenvalue $\lambda_*$ if it is simple and the minimal label if $\lambda_*$ is degenerate.

To state the main result of~\cite{BeCoMa}, we need to introduce Dirichlet-to-Neumann operators. We only do this at an intuitive level at this point, and refer the reader to~\cite{AM} for more details. Assume that $E\subset\mathbb R^2$ is a bounded domain, and that $\lambda$ is not in the spectrum of the Dirichlet Laplacian $\laplace_E$ on $E$. Given a sufficiently regular function $g$ on $\partial E$, let $u$ be the unique solution to
\begin{equation}\label{eq:defDN}
\begin{cases}
-\Delta u=\lambda u& \text{in }E,\\
u=g & \text{on }\partial E.
\end{cases}
\end{equation}
Then, the Dirichlet-to-Neumann operator $\DN_E(\lambda)\colon L^2(\partial E)\to L^2(\partial E)$ is defined by
\[
\DN_E(\lambda)g:=\frac{\partial u}{\partial \nu},
\]
where $\nu$ is a unit normal vector pointing out of $E$. Again, the reader is referred to \cite[Section~2]{AM} for more details. We denote by $R_{\Gamma\shortto\partial D_i}:L^2(\Gamma)\to L^2(\partial D_i)$ the operator that first extends by $0$ on $\partial\Omega$ to get a function on $\Gamma \cup \partial \Omega$  and then restricts to $\partial D_i$, and we denote by $R_{\partial D_i\shortto\Gamma}\colon L^2(\partial D_i)\to L^2(\Gamma)$ the extension by $0$ operator to $\Gamma\cup \partial \Omega$ composed by the restriction operator from $L^2(\Gamma\cup\partial\Omega)$ to $L^2(\Gamma)$. We can then introduce the  $\Gamma$-Dirichlet-to-Neumann operator $\DN(\Gamma,\lambda)$ by 
\[
  \DN(\Gamma,\lambda)
  = \sum_{i=1}^k   R_{\partial D_i\shortto\Gamma}  \DN_{D_i} (\lambda)  R_{\Gamma \shortto\partial D_i},
\]
considered as an unbounded operator in $L^2(\Gamma)$. This is (for a suitable $\lambda$, see below) the operator  $\Lambda_+(\epsilon)+\Lambda_-(\epsilon)$ introduced  in \cite{BeCoMa} but we have preferred this presentation which is easier to generalize. Formally, the operator is defined via the quadratic form with form domain $H^{1/2}(\Gamma)$.

\begin{theorem}[{\cite{CJM,BeCoMa}}]\label{thm:BeCoMa}
If $\epsilon>0$ is sufficiently small, then
\begin{equation}\label{eq:BeCoMa}
k_*-\mu(\phi_*)
=1-\dim\ker(\laplace_\Omega-\lambda_*)
+\Mor\bigl(\DN(\Gamma,\lambda_*+\epsilon)\bigr),
\end{equation}
where $\Mor$ counts the number of negative eigenvalues of an operator (the so-called Morse index of the operator).
\end{theorem}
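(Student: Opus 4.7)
My plan is to compare $\laplace_\Omega$ with the broken Laplacian $\laplace_\Omega^\Gamma$, obtained by imposing an extra Dirichlet condition along $\Gamma$. Since $\phi_*|_{D_i}$ is a sign-definite ground state of $\laplace_{D_i}$ with eigenvalue $\lambda_*$, the operator $\laplace_\Omega^\Gamma = \bigoplus_i \laplace_{D_i}$ has $\lambda_*$ as its infimum with multiplicity exactly $\mu(\phi_*)$. I will fix $\epsilon>0$ small enough that $(\lambda_*,\lambda_*+\epsilon]$ avoids the spectra of both $\laplace_\Omega$ and $\laplace_\Omega^\Gamma$, and write $N(\lambda,A)$ for the number of eigenvalues of a self-adjoint operator $A$ strictly below $\lambda$. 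Counting gives at once
\[
N(\lambda_*+\epsilon,\laplace_\Omega^\Gamma)=\mu(\phi_*), \qquad N(\lambda_*+\epsilon,\laplace_\Omega)=k_*-1+\dim\ker(\laplace_\Omega-\lambda_*).
\]

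The heart of the argument will be the Birman--Schwinger / Krein-type identity
\[
N(\lambda,\laplace_\Omega)-N(\lambda,\laplace_\Omega^\Gamma)=\Mor\bigl(\DN(\Gamma,\lambda)\bigr), \qquad \lambda\notin\mathrm{spec}(\laplace_\Omega^\Gamma),
\]
which I would establish by a spectral flow argument. First, for such $\lambda$, an eigenpair $(\lambda,u)$ of $\laplace_\Omega$ is in bijection with a nonzero $g\in\ker\DN(\Gamma,\lambda)\subset L^2(\Gamma)$: on each $D_i$, $u$ is the unique solution of~\eqref{eq:defDN} with boundary values $g|_{\partial D_i}$, and the matching of normal derivatives across $\Gamma$ is precisely the condition $\DN(\Gamma,\lambda)g=0$. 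Second, differentiating~\eqref{eq:defDN} in $\lambda$ and applying Green's identity yields the Hadamard-type formula
\[
\langle \partial_\lambda\DN(\Gamma,\lambda)g,g\rangle=-\sum_{i=1}^{k}\|u_i\|_{L^2(D_i)}^2,
\]
so each eigenvalue of $\DN(\Gamma,\cdot)$ is strictly decreasing in $\lambda$ away from the poles at $\mathrm{spec}(\laplace_\Omega^\Gamma)$. Tracking the Morse index as $\lambda$ sweeps from below the bottom of the spectrum up to $\lambda_*+\epsilon$, it increases by the relevant multiplicity at each crossing through $0$ (an eigenvalue of $\laplace_\Omega$ not shared with $\laplace_\Omega^\Gamma$) and decreases when eigenvalues escape to $-\infty$ at a pole (an eigenvalue of $\laplace_\Omega^\Gamma$), and the cumulative count yields the displayed identity.

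Combining the three displays at $\lambda=\lambda_*+\epsilon$ gives
\[
k_*-1+\dim\ker(\laplace_\Omega-\lambda_*)-\mu(\phi_*)=\Mor\bigl(\DN(\Gamma,\lambda_*+\epsilon)\bigr),
\]
which rearranges to~\eqref{eq:BeCoMa}.

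The hard part will be the rigorous bookkeeping in the spectral flow at \emph{shared} eigenvalues of $\laplace_\Omega$ and $\laplace_\Omega^\Gamma$, where both $N$'s and $\Mor(\DN)$ can jump simultaneously and the DN operator is singular. Such a coincidence is unavoidable at $\lambda_*$ itself, since $\phi_*$ vanishes on $\Gamma$ and is therefore a common eigenfunction of both operators; moreover, further linear combinations of the ground states of the $\laplace_{D_i}$ may extend to eigenfunctions of $\laplace_\Omega$ depending on the geometry of $\phi_*$ along $\Gamma$. Evaluating the identity at $\lambda=\lambda_*+\epsilon$ rather than at $\lambda_*$ is precisely what circumvents this singularity and produces the correction term $1-\dim\ker(\laplace_\Omega-\lambda_*)$ in~\eqref{eq:BeCoMa}.
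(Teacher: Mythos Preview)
Your argument is sound and reaches the right conclusion, but it follows the route of~\cite{CJM} rather than that of~\cite{BeCoMa}, which is the one the paper actually adopts (both in its summary immediately after Theorem~\ref{thm:BeCoMa} and in its own proof of the magnetic analogue, Theorem~\ref{thm:main}). You run a spectral flow in the energy parameter $\lambda$: you track the eigenvalues of $\DN(\Gamma,\lambda)$ as they cross zero (at eigenvalues of $\laplace_\Omega$) and escape through poles (at eigenvalues of $\laplace_\Omega^\Gamma$), and read off the counting identity $N(\lambda,\laplace_\Omega)-N(\lambda,\laplace_\Omega^\Gamma)=\Mor\bigl(\DN(\Gamma,\lambda)\bigr)$. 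The paper instead fixes the level at $\lambda_*+\epsilon$ and runs a flow in the Robin parameter $\sigma$, via the interpolating family $\laplace_{\Omega,\sigma}$ with form $\mathfrak s_\sigma$: the key bijection (Lemma~\ref{lem:eigeig} in the magnetic setting) is that $-\sigma$ is a negative eigenvalue of $\DN(\Gamma,\lambda_*+\epsilon)$ if and only if $\lambda_*+\epsilon$ is an eigenvalue of $\laplace_{\Omega,\sigma}$, and one then counts how many monotone branches $\lambda_n(\sigma)$ cross the level $\lambda_*+\epsilon$ as $\sigma$ runs from $0$ to $+\infty$.

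The payoff of the $\sigma$-flow is precisely that it dissolves what you correctly flag as your ``hard part.'' The analytic branches $\sigma\mapsto\hat\lambda_n(\sigma)$ have no poles and are each either strictly increasing or identically constant; the constant branches are exactly those carried by eigenfunctions of $\laplace_\Omega$ that vanish on $\Gamma$, i.e.\ the eigenfunctions shared with $\laplace_\Omega^\Gamma$. Such a branch sits at a fixed value $\leq\lambda_*$ and never meets $\lambda_*+\epsilon$, so it simply drops out of the count with no cancellation argument required. In your $\lambda$-flow, by contrast, the passage through $\lambda_*$ (unavoidably both a shared eigenvalue and a pole of multiplicity $\mu(\phi_*)$) forces you to match the zero-crossing increment against the pole decrement in the correct multiplicities; this can certainly be done, but it is genuine work, whereas the $\sigma$-parametrization sidesteps it entirely.
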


\begin{remark}
The number $k_*-\mu(\phi_*)$ in the left-hand side above is non-negative due to Courant's nodal theorem. It is usually called the \emph{nodal deficiency} of the eigenfunction $\phi_*$ (see for example~\cite{BeCoMa}). If $\lambda_*$ is a simple eigenvalue of $\laplace_\Omega$ then the right-hand side above is non-negative, and an independent argument for Courant's theorem is provided.
\end{remark}

It turns out, that to characterize the negative eigenvalues of the operator $\DN(\Gamma,\lambda_*+\epsilon)$ it is fruitful to study the family of operators $\laplace_{\Omega,\sigma}$, $0\leq\sigma<+\infty$, induced by the bilinear form
\[
\mathfrak s_\sigma(u,v)=\int_\Omega \nabla u\cdot\nabla v\,dx+\sigma\int_\Gamma u\,v\,ds,
\quad u,v\in H_0^1(\Omega).
\]
Also, let $\laplace_{\Omega,+\infty}$ be the Laplacian in $\Omega$ with Dirichlet boundary conditions imposed on $\partial\Omega\cup\Gamma$. Indeed, if we denote by $\{\lambda_k(\sigma)\}_{k=1}^{+\infty}$ the set of eigenvalues of $\laplace_{\Omega,\sigma}$, in increasing order, then Berkolaiko--Cox--Marzuola shows  by following the analytic branches of the eigenvalues of $\laplace_{\Omega,\sigma}$, that if $\epsilon>0$ is sufficiently small, then $-\sigma$ is an eigenvalue of $\DN(\Gamma,\lambda_*+\epsilon)$ if, and only if, $\lambda_*+\epsilon=\lambda_k(\sigma)$ for some $k\in\mathbb N$.

\subsection{New results}\label{sec:ourresults}
The results we obtain look very similar to the one in Theorem~\ref{thm:BeCoMa}, but with slightly different terms. Let us discuss here, on a formal level, the ingredients, and refer to the places where the objects are introduced more carefully.

The main difference in our approach is that we will consider a specific  magnetic Schr\"odinger  operator instead of the Laplace operator. The eigenfunctions corresponding to the magnetic Schrödinger  operator do not usually divide the domain $\Omega$ into nodal domains, since they are in general complex-valued. It has been seen, though, that one obtains a similar partition of $\Omega$ by the nodal domains of  the \enquote{real} eigenfunctions of  a  \enquote{special}   magnetic Aharonov--Bohm Laplacian associated with a magnetic field consisting of  sum of delta distributions  with coefficients $\pi$. This was first observed by Berger--Rubinstein~\cite{BeRu}, more systematically developed in~\cite{HHOO} and the link with minimal spectral partitions appears later quite useful, theoretically~\cite{HH:2005a,HHOT, HH:2015b} and numerically~\cite{BHHO}.

Nodal partitions and minimal partitions share the property to be regular equi-partitions and to satisfy the Pair Compatibility Condition (PCC). Precise definitions will be given in Section~\ref{sec:equipartitions}, but say simply here that we are given a $k$-partition $\mathcal D$  of $\Omega$ with disjoint open sets $D_i$ such that $\overline{\cup D_i} =\overline{\Omega}$ whose boundary  set $\Gamma = \Omega \cap (\cup \partial D_i)$ shares the properties of the nodal set of an eigenfunction of  the Dirichlet Laplacian in $\Omega$, except that at its singular points (corresponding to meeting of more than two regular half-lines) it is permitted that an odd number of half-lines meet. We call these singular points odd.  An equi-partition has the property that all the ground state energies $\lambda(D_i)$ (i.e the first eigenvalues) of the Dirichlet Laplacian in $D_i$ are equal. Finally  the PCC property says roughly that when $D_i$ and $D_j$ are neighbours $\partial D_i \cap \partial D_j$ should be the nodal set of an eigenfunction of the Dirichlet Laplacian in the interior of  $\overline{D_i\cup D_j}$. 

Assume that $\mathcal D$ is a regular $k$-equipartition of a simply connected domain $\Omega$  that satisfies the PCC. Its energy $\Lambda(\mathcal D)$ is defined as the common value of the $\lambda(D_i)$. 

Given the $k$-partition $\mathcal D$ it is possible to define  a magnetic vector potential $\Ab^\Xb$ corresponding to it, consisting of Ahararonov--Bohm solenoids 
located at  the odd critical points $\Xb$ of the partition, and the magnetic Schrödinger  operator $T_{\Ab^\Xb}$ (as the Friedrichs extension,  originally defined on $C_0^{+\infty}(\Omega\setminus\Xb)$). With this choice of points $\Xb$, the magnetic Hamiltonian $T_{\Ab^\Xb}$ will have an eigenvalue $\mathfrak l_k$ that equals $\Lambda(\mathcal D)$.

We introduce the defect $\Def(\mathcal D)$ of the partition $\mathcal D$ as
\[
\Def(\mathcal D):=\ell(\mathcal D)-k(\mathcal D),
\]
where $\ell(\mathcal D)$ denotes the minimal labelling of the eigenvalue $\mathfrak l_k$ of the AB Hamiltonian $T_{\Ab^\Xb}$, and $k=k(\mathcal D)$ is the number of components of the partition~$\mathcal D$. It is the deficiency $\Def(\mathcal D)$ that will replace the nodal deficiency in the left-hand side in Theorem~\ref{thm:BeCoMa}.

For the right-hand side, $\dim\ker(T_{\Ab^\Xb}-\mathfrak l_k)$ will replace $\dim\ker(\laplace_\Omega-\lambda_*)$. The number of negative eigenvalues of the $\Gamma$-Dirichlet-to-Neumann operator, $\Mor\bigl(\DN(\Gamma,\lambda_*+\epsilon)\bigr)$, will be replaced by a similar term $\Mor\bigl(\DN_{\mathcal D,\Ab^\Xb}(\mathfrak l_k+\epsilon)\bigr)$. Here the operator is a magnetic Dirichlet-to-Neumann type operator, that is defined by replacing in~\eqref{eq:defDN} the Laplacian $-\Delta$ by $T_{\Ab^\Xb}$, and we refer to Subsection~\ref{sec:magneticDN} for the details.

We are thus ready to state our main result, and for simplicity we do it for simply connected domains $\Omega$.
 
\begin{theorem}\label{thm:main}
Let $\mathcal D$ be a regular $k$-equipartition of  a simply connected domain~$\Omega$ satisfying the PCC with energy $\mathfrak l_k=\Lambda(\Omega)$. 
Let $\Ab^{\Xb}$ be the associated Aharonov--Bohm potential.
Then, for sufficiently small $\epsilon >0$, 
\[
\Def(\mathcal D)= 1 - \dim \ker (T_{\Ab^\Xb} -\mathfrak l_k)+ \Mor\bigl(\DN_{\mathcal D,\Ab^\Xb}(\mathfrak l_k+\epsilon)\bigr).
\]
\end{theorem}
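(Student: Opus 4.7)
The plan is to mimic the spectral flow strategy of \cite{BeCoMa}, substituting the Aharonov--Bohm operator $T_{\Ab^\Xb}$ for the Dirichlet Laplacian $\laplace_\Omega$. First I would introduce the magnetic analogue of the auxiliary family $\laplace_{\Omega,\sigma}$: for each $\sigma\in[0,+\infty]$, the operator $T_{\Ab^\Xb,\sigma}$ obtained by closing the form
\[
\mathfrak t_\sigma(u,u)=\int_\Omega |(\nabla-\i\Ab^\Xb)u|^2\,dx+\sigma\int_\Gamma |u|^2\,ds
\]
on the magnetic form domain of $T_{\Ab^\Xb}$, with $\sigma=+\infty$ corresponding to adding Dirichlet conditions on $\Gamma$. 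Because $\Omega$ is simply connected and the AB singularities lie only on $\Gamma$ (never inside any cell), a local gauge transformation identifies $T_{\Ab^\Xb,+\infty}$ with the direct sum $\bigoplus_{i=1}^{k}\laplace_{D_i}$. Since each cell is connected and its Dirichlet ground state is simple, the equipartition assumption forces $\mathfrak l_k$ to be an eigenvalue of $T_{\Ab^\Xb,+\infty}$ of multiplicity exactly $k$, separated from the rest of the spectrum by a positive gap.

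The central technical step is the magnetic analogue of the DN/eigenvalue-curve dictionary of \cite{BeCoMa}: for $\epsilon>0$ smaller than that gap, $-\sigma$ is an eigenvalue of $\DN_{\mathcal D,\Ab^\Xb}(\mathfrak l_k+\epsilon)$ if and only if $\mathfrak l_k+\epsilon$ is an eigenvalue of $T_{\Ab^\Xb,\sigma}$. One direction follows from a magnetic Green identity applied to the magnetic Dirichlet problem on each $D_i$ with boundary data $g$ on $\Gamma$ (extended by zero on $\partial\Omega$): the form-domain matching along $\Gamma$ transforms the equation $T_{\Ab^\Xb,\sigma}u=(\mathfrak l_k+\epsilon)u$ into $\DN_{\mathcal D,\Ab^\Xb}(\mathfrak l_k+\epsilon)g=-\sigma g$. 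The converse direction glues magnetic Dirichlet solutions on the cells along the boundary values of a DN eigenvector.

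Once this correspondence is established, I would follow the analytic Kato branches $\sigma\mapsto\mathfrak l_j(\sigma)$ of the eigenvalues of $T_{\Ab^\Xb,\sigma}$. The Hellmann--Feynman identity
\[
\frac{d}{d\sigma}\mathfrak l_j(\sigma)=\int_\Gamma |u_j(\sigma)|^2\,ds\ge 0
\]
makes every branch monotone nondecreasing. Set $m:=\dim\ker(T_{\Ab^\Xb}-\mathfrak l_k)$ and $\ell:=\ell(\mathcal D)$. At $\sigma=0$, the count of eigenvalues $\le\mathfrak l_k+\epsilon$ equals $\ell-1+m$; at $\sigma=+\infty$ it equals $k$, the multiplicity of $\mathfrak l_k$ in the decoupled operator. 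Being integer-valued and nonincreasing in $\sigma$, this count decreases by exactly $(\ell-1+m)-k=\Def(\mathcal D)+m-1$ across $[0,+\infty]$, with each unit decrease corresponding to one upward crossing of the level $\mathfrak l_k+\epsilon$. Invoking the DN correspondence to translate these crossings into negative DN eigenvalues gives
\[
\Mor\bigl(\DN_{\mathcal D,\Ab^\Xb}(\mathfrak l_k+\epsilon)\bigr)=\Def(\mathcal D)+m-1,
\]
which rearranges to the announced identity.

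The main difficulty I anticipate is the singular endpoint $\sigma=+\infty$: both the convergence of the eigenvalues and the analytic continuation of the Kato branches up to this value require justification, as does the identification of the constant branches at $\mathfrak l_k$ with the magnetic eigenfunctions produced by the PCC (namely those that already vanish on $\Gamma$). Here the double-covering and slitting descriptions promised in the introduction serve as useful cross-checks: both provide alternative realizations of the limit operator as a bona fide Dirichlet Laplacian, turning the $\sigma=+\infty$ analysis into a standard perturbation problem on each cell and reducing the whole argument to essentially the same linear bookkeeping that underlies the nodal case of \cite{BeCoMa}.
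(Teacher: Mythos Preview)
Your proposal is correct and follows essentially the same route as the paper: introduce the family $T_{\Ab^\Xb,\sigma}$ via the form $\mathfrak t_\sigma$, identify $T_{\Ab^\Xb,+\infty}$ with $\bigoplus_i \laplace_{D_i}$ via local gauge, establish the correspondence between negative eigenvalues of $\DN_{\mathcal D,\Ab^\Xb}(\mathfrak l_k+\epsilon)$ and level crossings of the monotone branches $\lambda_n(\sigma)$, and count crossings by comparing the number of eigenvalues below $\mathfrak l_k+\epsilon$ at $\sigma=0$ (namely $\ell-1+m$) and at $\sigma=+\infty$ (namely $k$). The paper handles the endpoint $\sigma=+\infty$ you flag by citing resolvent convergence results from \cite{AM0}, and treats the constant-versus-strictly-increasing dichotomy for the analytic branches exactly as in \cite{BeCoMa}.
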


We willl give in Theorem~\ref{thm:flowa} a non-magnetic version of this theorem, based on certain cutting constructions of the domain.

\subsection{Outline} 
The rest of this article has the following structure. In Section~\ref{sec:equipartitions} we introduce more carefully the different types of partitions, with a focus on equipartitions. We discuss in Section~\ref{sec:AB} the Aharonov--Bohm operators and their connection with the partitions. We define the magnetic Dirichlet-to-Neumann operator in Section~\ref{sec:magneticDN}, and give the proof of Theorem~\ref{thm:main} in Section~\ref{sec:proofofmainthm}. We end Section~\ref{sec:AB} by shortly discussing another approach via double coverings. In Section~\ref{sec:cutting} we discuss the cutting construction, avoiding the Aharonov--Bohm operators. In Appendix~\ref{ex:circle} we look at a hopefully instructive toy example on a circle.

\section{Equipartitions: Notation and definitions}\label{sec:equipartitions}

In this section, we describe in which framework we will generalize the results of \cite{BeCoMa}.

\subsection{Equipartitions, nodal partitions, and minimal partitions} 
We consider a bounded connected open set $\Omega$ in $\mathbb R^2$. A \emph{$k$-partition} of $\Omega$ is a family $\mathcal D=\{D_i\}_{i=1}^k$ of mutually disjoint, connected, open sets in $\Omega$ such that $\overline\Omega=\overline{\cup_{i=1}^k D_i}$. We denote by $\mathfrak O_k(\Omega)$ the set of $k$-partitions of $\Omega$. If $\mathcal D=\{D_i\}_{i=1}^k\in \mathfrak O_k(\Omega)$ and the eigenvalues $\lambda_1(D_i)$ of the Dirichlet Laplacian in $D_i$ are equal for $1\leq i\leq k$, we say that the partition $\mathcal D$ is a \emph{spectral equipartition}. This is the type of partitions we will work on. We give two examples of how such partitions occur.

We denote by $\{\lambda_j(\Omega)\}_{j=1}^{+\infty}$ the increasing sequence of eigenvalues of the Dirichlet Laplacian in $\Omega$ and by $\{u_j\}_{j=1}^{+\infty}$ some associated orthonormal basis of real-valued eigenfunctions. The ground state  $u_1$ can be chosen to be strictly positive in $\Omega$, but the other eigenfunctions $\{u_j\}_{j\geq 2}$ must have zero sets. For a function $u\in C^0(\overline\Omega)$,  we define the \emph{zero set} $N(u)$ of $u$ as
\[
N(u)=\overline{\{x\in \Omega\:\big|\: u(x)=0\}},
\]
and call the components of $\Omega\setminus N (u)$ the \emph{nodal domains} of $ u$. Such a partition of $\Omega$ is called a \emph{nodal partition}, and we denote the number of nodal domains of $u$ by $\mu(u)$. These $\mu(u)$ nodal domains define a $k$-partition of $\Omega$, with $k=\mu(u)$. 

Since an eigenfunction $u_j$, restricted to each nodal domain satisfy the  eigenvalue equation $-\Delta u_j=\lambda_j u_j$ together with the Dirichlet boundary condition, it follows that each nodal partition is indeed a spectral equipartition. By the Courant nodal theorem, $\mu(u_j)\leq j$. We also say that the pair $(\lambda_j,u_j)$ is \emph{Courant sharp} if $\mu(u_j)=j$.

For any integer $ k\ge 1$, and for $ \mathcal D$ in $ \mathfrak O_k(\Omega)$, we introduce the \emph{energy $\Lambda(\mathcal D)$ of the partition} $\mathcal D$,
\[
\Lambda(\mathcal D)=\max_{i}\lambda_1(D_i).
\]
Then we define
\[
\mathfrak L_{k}(\Omega)=\inf_{\mathcal D\in \mathfrak O_k}\:\Lambda(\mathcal D).
\]
and  call  $ \mathcal D\in  \mathfrak O_k$ a \emph{minimal spectral $k$-partition}  if $\mathfrak L_{k}(\Omega)=\Lambda(\mathcal D)$.

If $ k=2$, it is rather well known (see~\cite{HH:2005a} or~\cite{CTV:2005}) that $ \mathfrak L_2(\Omega) =\lambda_2(\Omega)$ and that the associated minimal $2$-partition is a nodal partition, consisting of the nodal domains of some eigenfunction corresponding to second eigenvalue $\lambda_2(\Omega)$. In general, every minimal spectral partition is an equipartition (see~\cite{HHOT}).

\subsection{Regularity assumptions on partitions}\label{sec:regularityassumptions}
Attached to a  partition $\mathcal D$, we  associate a closed set in $ \overline{\Omega}$, which is called the \emph{boundary set}  of the partition:
\[
\mathcal N(\mathcal D)= \overline{ \cup_i \left( \partial D_i \cap \Omega \right)}.
\]
The set $\mathcal N(\mathcal D)$ plays the role of the nodal set (in the case of a nodal partition).

Further, we call a partition  $\mathcal D$ \emph{regular} if its associated  boundary set  $\mathcal N(\mathcal D)$ is a regular closed set in $\overline{\Omega}$. In general, a closed set $K\subset\overline{\Omega}$ is said to be \emph{regular closed} in $\overline{\Omega}$ if
\begin{enumerate}[(i)]
\item Except for finitely many distict critical points $\{x_\ell\}\subset K\cap\Omega$, the set $K$ is locally diffeomorphic to a regular curve. In the neighborhood of each critical point $x_\ell$ the set $K$ consists of a union of $\nu_\ell\geq 3$ smooth half-curves with one end at $x_\ell$.
\item The set $K\cap\partial\Omega$ consists of a (possibly empty) finite sets of boundary points $\{z_m\}$. Moreover, in a neighborhood of each  boundary point $z_m$, the set $K$ is a union of $\rho_m$ distinct smooth half-curves with one end at~$z_m$.
\item The set $K$ has the \emph{equal angle meeting property}. By this we mean that the half-curves meet with equal angle at each critical point of $K$, as well as at the boundary (together with the tangent to the boundary).
\end{enumerate}

Nodal sets are regular~\cite{Be} and in~\cite{HHOT} it is proven that minimal partitions are regular (modulo a set of capacity $0$).

For our discussion we need a weaker version of regularity which is only expressed on the \enquote{boundary set}. The first and second items remain as in the previous definition, but (iii) is changed. Indeed, we say that the closed set $K\subset\overline{\Omega}$ is \emph{weakly regular} if (i) and (ii) above hold, and further if

\begin{enumerate}[(i),resume]
\item The set $K$ has the \emph{transversal meeting property}. By this we mean the following: The set $K\cap \partial\Omega$ consists of a (possibly empty) finite set of boundary points $\{z_m\}$. Moreover $K$ is near each boundary point $z_m$ the union of $\rho_m$  smooth half-curves (with distinct tangent vectors at $z_m$) which hit $z_m$ transversally to the boundary $\partial\Omega$. Finally, at each critical point of $K$ in the interior of $\Omega$, the half-curves meet in a transversal way (i.e. no cusps).
\end{enumerate}

\begin{figure}[ht]
  \centering
  \makebox[\textwidth][c]{%
  \subcaptionbox{\label{partitions-a}}{\includegraphics[page=13]{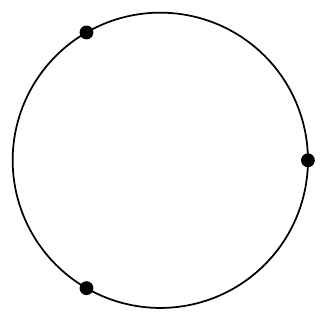}}
  \hskip 0.25cm
  \subcaptionbox{\label{partitions-b}}{\includegraphics[page=14]{figures.pdf}}
  }
  \caption{Partitions of a set $\Omega$ with three holes. In both cases $\nu_1=5$, $\rho_1=\rho_3=\rho_4=\rho_5=1$ and $\rho_2=3$. \subref{partitions-a} A regular partition. Note that the angles between the curves meeting at $x_1$ are $2\pi/5$ and that the angles between the curves meeting at $z_2$ and the boundary is $\pi/4$. At $z_1$, $z_3$, $z_4$ and $z_5$ the curves meet the boundary under a right angle. \subref{partitions-b} This partition is weakly regular. The curves meet the boundary and the critical point transversally, but not necessarily under equal angles.}
  \label{fig:regularpartition}
\end{figure}

\subsection{Odd and even points}\label{sec:oddevenpoints} 
Given a partition $\mathcal D$ of $\Omega$, we denote by $X^{\mathrm{odd}}(\mathcal D)$ the set of odd critical points, i.e. points $x_\ell$ for which $\nu_\ell$ is odd. Note that this set is empty when we are considering nodal partitions. When $\partial \Omega$ has one exterior boundary and $m$ interior boundaries (corresponding to $m$ holes), we should also consider the property (see~\cite{HHOO}) that an odd number of lines arrives at some component of the interior boundary (think of the hole as a point). It seems that  the assumption that there was only one boundary component was implicitly done in the litterature, or at least we should distinguish between the odd interior boundaries and the even boundaries. This would play a role in the definition of the Aharonov--Bohm operator or in the construction of the double covering.

We define by $\partial \Omega^{\mathrm{odd}}(\mathcal D)$ the union of the interior components of $\partial \Omega$ for which an odd number of lines of $\mathcal N(\mathcal D)$ arrive. In other words, we will speak of odd holes when we are in this case and $\partial \Omega^{\mathrm{odd}}(\mathcal D)$ corresponds to the union of the boundaries of the odd holes. In Figure~\ref{fig:regularpartitionoddpoints} we have marked $\partial \Omega^{\mathrm{odd}}(\mathcal D)$ in bold.

\begin{figure}[ht]
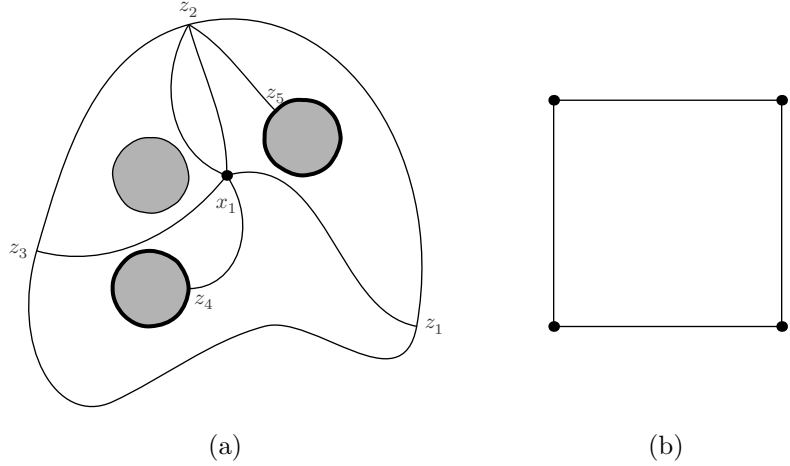

  \centering
  \subcaptionbox{\label{partitions-odd}}{\includegraphics[page=15]{figures.pdf}}
  \hskip 1cm
  \subcaptionbox{\label{partitions-graph}}{\includegraphics[page=16]{figures.pdf}}
  \caption{\subref{partitions-odd} The partition $\mathcal D$ of $\Omega$ 
  from Figure~\ref{fig:regularpartition}\subref{partitions-a}, here with the set 
  $\partial \Omega^{\mathrm{odd}}(\mathcal D)$ in bold. 
  \subref{partitions-graph} The graph $G(\mathcal D)$ associated with the 
  partition $\mathcal D$. Note that it is bipartite.}
  \label{fig:regularpartitionoddpoints}
\end{figure}

\subsection{Pair compatibility condition}\label{sec:PCC}
Given an partition $\mathcal D=\{D_i\}$ of $\Omega$, we say that $D_i$ and $D_j$  are \emph{neighbors}, which we write $D_i\sim D_j$, if the set $D_{ij}:=\Int(\overline {D_i\cup D_j})\setminus \partial \Omega$ is connected. We associate with $\mathcal D$ a graph $G(\mathcal D)$ by associating with each $D_i$ a vertex and to each pair $ D_i\sim D_j$ an edge. We recall that a graph is said to be \emph{bipartite} if its vertices can be colored by two colors so that all pairs of neighbors have different colors. We say that $\mathcal D$ is \emph{admissible} if the associated graph $G(\mathcal D)$ is bipartite. Nodal partitions are always admissible, since the eigenfunction changes sign when going from one nodal domain to a neighbor nodal domain.

We turn to a compatibility condition between neighbors in a partition, developed in \cite{HH:2005a}. Let $\mathcal D=\{D_i\}_{i=1}^k$ be a regular equipartition of energy $\Lambda(\mathcal D)$. Given two neighbors $D_i$ and $D_j$, $\Lambda(\mathcal D)$ is the groundstate energy of both $\laplace_{D_i}$ and $\laplace_{D_j}$. There is, however, in general no way to construct a function $u_{ij}$ in the domain of $\laplace_{D_{ij}}$ such that $u_{ij} = c_i u_i$ in $D_i$ and $u_{ij} = c_j u_j$ in $D_j$. For this to be possible, it must hold that the normal derivatives of $u_i$ and $u_j$ are proportional on $\partial D_i \cap \partial D_j$.

We say that the regular partition $\mathcal D=\{D_i\}_{i=1}^k$ satisfies the \emph{pair compatibility condition}, (for short PCC), if, for some $\lambda\in \mathbb R$, and for any pair $(i,j)$ such that $D_i\sim D_j$, there is an eigenfunction $u_{ij}\not\equiv 0$ of $\laplace_{D_{ij}}$ such that $\laplace_{D_{ij}}u_{ij}=\lambda u_{ij}$, and where the nodal set of $u_{ij}$ is given by $\partial D_i\cap \partial D_j$. We refer to Figure~\ref{fig:5pcc} for some $5$-partitions of the square that satisfy the PCC. Nodal partitions and spectral minimal partitions satisfy the PCC.

\begin{remark}
In the case of bipartite equipartitions which are \enquote{generic} (i.e.  whose boundary set has no critical points and satisfies transversality conditions at the boundary) necessary and sufficient conditions to have (PCC) are given by Berkolaiko--Kuchment--Smilansky in~\cite{BKS}. They also give a formula for the nodal deficiency using the Morse index of some functional.
\end{remark}

\subsection{Admissible $k$-partitions and Courant sharp eigenvalues}
It has been proved by Conti--Terracini--Verzini~\cite{CTV0, CTV2,CTV:2005} and  Helffer--T.~Hoffmann-Ostenhof--Terracini~\cite{HHOT}, that, for any $k\in\mathbb N$, there exists a  minimal  regular $k$-partition. Other proofs of a  somewhat weaker version of this statement have been given by Bucur--Buttazzo--Henrot \cite{BBH}, Caffarelli--F.H. Lin~\cite{CL1}. It is also proven (see~\cite{HH:2005a,HHOT}) that if the graph of a minimal partition is bipartite,  then  this partition is nodal.A natural question was to determine how general the previous situation is. Surprisingly this only occurs in the Courant sharp situation.

For any integer $ k\ge 1$, we denote by $L_k(\Omega)$ the smallest eigenvalue of $\laplace_{\Omega}$, whose eigenspace contains an eigenfunction with $k$ nodal domains. We set $L_k(\Omega) = +\infty$,  if there are no  eigenfunction with $k$ nodal domains. In general, one can show that (see~\cite[Corollary~5.6]{HHOT})
\[
\lambda_k(\Omega) \leq\mathfrak L_k(\Omega) \leq L_k(\Omega) \;.
\]
The following result gives the full picture of the equality cases:

\begin{theorem}[{\cite[Theorem~1.17]{HHOT}}]\label{L=L}
Suppose that $\Omega\subset \mathbb R^2$ is smooth and that $k\in\mathbb N$. If $\mathfrak L_k(\Omega)=L_k(\Omega)$ or $\mathfrak L_k(\Omega)=\lambda_k(\Omega)$ then 
\[
\lambda_k(\Omega)=\mathfrak L_k(\Omega)=L_k(\Omega),
\]
and one can find a Courant sharp eigenpair $(\lambda_k,u_k)$.
\end{theorem}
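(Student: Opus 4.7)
The chain $\lambda_k(\Omega) \le \mathfrak L_k(\Omega) \le L_k(\Omega)$ being already recorded, the plan is to show that either extremal equality $\mathfrak L_k = \lambda_k$ or $\mathfrak L_k = L_k$ collapses the chain and produces a Courant sharp pair $(\lambda_k, u_k)$. The common engine is that a minimal regular $k$-partition $\mathcal D = \{D_i\}_{i=1}^k$, with $L^2$-normalized Dirichlet ground states $u_i$ extended by zero to $\tilde u_i \in H_0^1(\Omega)$, produces a $k$-dimensional test subspace $V = \operatorname{span}\{\tilde u_i\}$ on which
\[
\int_\Omega |\nabla v|^2\,dx = \Lambda(\mathcal D) \int_\Omega v^2\,dx
\]
by disjointness of supports; Courant--Fischer then immediately yields $\lambda_k(\Omega) \le \Lambda(\mathcal D) = \mathfrak L_k(\Omega)$.

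For the case $\mathfrak L_k = \lambda_k$, I would turn $V$ into an honest eigenfunction of $\laplace_\Omega$ with eigenvalue $\lambda_k$. Since minimal partitions are regular and satisfy the pair compatibility condition, for every pair of neighbors $D_i \sim D_j$ the PCC supplies nonzero coefficients making the normal derivatives of $c_i u_i$ and $c_j u_j$ match across $\partial D_i \cap \partial D_j$. A perturbation argument on the partition energy $\Lambda$ then forces the graph $G(\mathcal D)$ to be bipartite: if an odd cycle were present one could locally rearrange $\mathcal D$ near one of the associated odd critical points and strictly decrease $\max_i \lambda_1(D_i)$, contradicting minimality. Bipartiteness makes the pair-wise PCC ratios cycle-consistent, producing globally defined signs $\epsilon_i \in \{\pm 1\}$ and positive weights $c_i$ so that $\phi := \sum_i \epsilon_i c_i \tilde u_i$ has both trace \emph{and} normal derivative continuous across every interface. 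Hence $\laplace_\Omega \phi = \lambda_k \phi$ weakly on all of $\Omega$, and the nodal domains of $\phi$ are exactly the $D_i$'s, so $\mu(\phi)=k$ and $(\lambda_k,\phi)$ is Courant sharp. This forces $L_k \le \lambda_k$ by the definition of $L_k$, and the chain collapses.

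For the case $\mathfrak L_k = L_k$, the definition of $L_k$ already supplies an eigenfunction $u$ with eigenvalue $L_k$ and $\mu(u) = k$, whose nodal partition $\mathcal D_u$ is automatically bipartite and minimal (since $\Lambda(\mathcal D_u) = L_k = \mathfrak L_k$). Writing $L_k = \lambda_n(\Omega)$ with $n \ge k$ (Courant), I would show $n = k$. The test subspace $V$ attached to $\mathcal D_u$ has $R \equiv L_k$ and contains $u$ as a genuine eigenfunction; polarizing $R \equiv L_k$ on $V$ gives $\int \nabla v \nabla w = L_k \int v w$ for every $v, w \in V$, and combined with $u \in V$ being an honest eigenfunction this forces $V \subset E_{\le L_k}$. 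If $n > k$, then $\lambda_k < L_k$ and the first $k$ eigenfunctions span a second $k$-dimensional space $W$ with $\sup_W R \le \lambda_k < L_k$; using $W$ together with the bipartite nodal structure of $\mathcal D_u$ one constructs a $k$-partition of $\Omega$ whose maximal ground state energy lies strictly below $L_k$, contradicting $\mathfrak L_k = L_k$. Hence $n = k$ and $(\lambda_k, u)$ is Courant sharp.

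The principal obstacle is the gluing step in the first case, where a global weak eigenfunction is assembled from the $k$ independent ground states of the minimal partition. Beyond the pair-wise normal-derivative matching provided by the PCC, one needs the pair-wise ratios to combine coherently along every cycle of $G(\mathcal D)$, and this forces bipartiteness of the graph. Proving bipartiteness from minimality alone (rather than from the a priori stronger property of being nodal) is the most delicate point, since it requires showing that any odd critical point of $\mathcal N(\mathcal D)$ is variationally unstable for the partition functional $\Lambda$ --- precisely the kind of local smoothing that produces a better competitor $k$-partition.
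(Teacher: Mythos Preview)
The paper does not prove this theorem; it is quoted from \cite{HHOT} as background. So there is no ``paper's proof'' to compare against, and I evaluate your argument on its own merits.

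Your Case~1 argument contains a genuine error. You claim that minimality of the $k$-partition forces the graph $G(\mathcal D)$ to be bipartite, via ``if an odd cycle were present one could locally rearrange $\mathcal D$ near one of the associated odd critical points and strictly decrease $\max_i \lambda_1(D_i)$.'' This is false: minimal partitions \emph{do} in general have odd critical points and are \emph{not} bipartite. The paper itself stresses this repeatedly (see the discussion around Theorem~\ref{L=L}, Section~\ref{Section5}, and Figure~\ref{fig:5pcc}); indeed, the whole point of the Aharonov--Bohm machinery developed here is to handle non-bipartite minimal partitions. So the perturbation you invoke does not exist, and your gluing step cannot be rescued along those lines.

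The correct route for $\mathfrak L_k=\lambda_k$ avoids bipartiteness entirely. With $V=\operatorname{span}\{\tilde u_i\}$ and $R\equiv\lambda_k$ on $V$, intersect $V$ with the orthogonal complement of the first $k-1$ eigenfunctions; the intersection is nontrivial by dimension. Any nonzero $v=\sum a_i\tilde u_i$ in it satisfies the equality case of the min--max for $\lambda_k$ and is therefore a genuine eigenfunction. If some $a_j=0$ then $v$ vanishes on the open set $D_j$, and unique continuation forces $v\equiv 0$, a contradiction. Hence all $a_i\neq 0$, the nodal domains of $v$ are exactly the $D_i$, and $(\lambda_k,v)$ is Courant sharp. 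Bipartiteness of $\mathcal D$ is then a \emph{consequence}, not an input. Your Case~2 sketch is also too vague at the step ``using $W$ together with the bipartite nodal structure of $\mathcal D_u$ one constructs a $k$-partition with energy strictly below $L_k$''; as written this is an assertion, not an argument, and the actual proof in \cite{HHOT} is more delicate.
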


\section{The Aharonov--Bohm approach}\label{sec:AB}

\subsection{The Aharonov--Bohm operator}
Let $\Omega\subset\mathbb R^2$ be a bounded connected domain. We recall some definitions and results about the Aharonov--Bohm (AB) Hamiltonian with poles at a finite number of points. These results were initially  motivated by the work of  Berger--Rubinstein~\cite{BeRu} and further developed in  \cite{AFT, HHOO,BHHO,BH}. We begin with the case of one pole.

\subsubsection{Simply connected $\Omega$, one AB pole}
We assume that there is one AB pole $X$ is located at $X=(x_{0},y_{0}) \in \Omega$ and introduce the magnetic vector potential
\[
{\mathbf{A}^X}(x,y) = (A_1^X(x,y),A_2^X(x,y))=\frac{\Phi}{2\pi}  \, \Bigl( -\frac{y-y_{0}}{r^2}, \frac{x-x_{0}}{r^2}\Bigr).
\]
Here $\Phi$ is the intensity of the AB magnetic field,  $\curl \mathbf{A}^X = \Phi\delta_{(x_0,y_0)}$ in the sense of distributions, and $r$ denotes the Euclidean distance between $(x,y)$ and $(x_0,y_0)$. In other words, the magnetic field vanishes identically in the punctured domain
\[
\dot\Omega_{X}= \Omega \setminus \{X\}\,,
\]
but there is a flux effect due to the non trivial circulation of the magnetic potential along a closed path enclosing $(x_0,y_0)$. We introduce the magnetic gradient $\nabla_{\Ab^X}$ as $\nabla_{\Ab^X}=\nabla-\i\Ab^X$, and consider the self-adjoint AB Hamiltonian $T_{\Ab^X}=-(\nabla_{\Ab^X})^2$. This operator is defined as the Friedrichs extension associated with the quadratic form
\[
C_0^{+\infty}(\dot\Omega_X)\ni u\mapsto \int_{\Omega}\bigl|\nabla_{\Ab^X}\,u\bigr|^2\,dx.
\]
We introduce next the multi-valued complex argument function
\[
\phi_X(x,y)=\arg\bigl(x-x_0+\i(y-y_0)\bigr).
\]
This function satisfies
\[
\Ab_X=\frac{\Phi}{2\pi}\nabla\phi_X.
\]
Thus, with the flux condition
\[
\frac{\Phi}{2\pi}=\frac 12
\]
one has
\[
-\Ab^X = \Ab^X - \nabla  \phi_{X},
\]
and that multiplication with the function $e^{\i\phi_X}$, uni-valued in $\dot\Omega_X$, is a gauge transformation intertwining $T_{\Ab^X}$ and $T_{-\Ab^X}$.

The anti-linear operator $K_X\colon L^2(\Omega)\to L^2(\Omega)$, defined by
\[
u\mapsto K_Xu=\exp(\i\phi_X)\bar u
\]
becomes a conjugation operator. In particular $(K_X)^2$ is the identity operator, $\langle K_X u,K_X v\rangle=\langle u,v\rangle$, and 
\[
K_X T_{\Ab^X}= T_{\Ab^X} K_X.
\]
We say that a function $u$ is  $K_X$-real, if it satisfies $K_{X} u =u.$ Then the operator $T_{\Ab^X}$ is preserving the $K_{X}$- real functions. In the same way  one proves that the Dirichlet Laplacian admits an orthonormal  basis of real valued eigenfunctions or one restricts this Laplacian to the vector space over $\mathbb R$ of the real-valued $L^2$ functions,  one  can construct for  $T_{{\bf A}^X}$  a basis of $K_{X}$-real eigenfunctions or, alternately, consider the restriction  of the AB Hamiltonian to the vector space over $\mathbb R$
\[
L^2_{K_{X}}(\dot{\Omega}_{X})=\{u\in L^2(\dot{\Omega}_{X}) \;,\; K_{X}\,u =u\,\}\,.
\]

\subsubsection{Simply connected $\Omega$, several AB poles}
We can extend our construction of an Aharonov--Bohm Hamiltonian in the case of a configuration with~$\ell$ distinct points $\Xb=\{X_j\}_{j=1}^\ell$ in $\Omega$  (putting a flux $\Phi=\pi$ at each of these points). We can just take as magnetic potential 
\[
\Ab^{\Xb} = \sum_{j=1}^\ell \Ab^{X_j}.
\]
The corresponding AB Hamiltonian $T_{\Ab^{\Xb}}$ is again defined as the Friedrichs extension, this time via the natural quadratic form in $C_0^{+\infty}(\dot\Omega_{\Xb})$, where $\dot{\Omega}_{\Xb} = \Omega \setminus \Xb$.

We can also construct (see \cite{HHOO}) the anti-linear operator $K_{\bf X}$,  where $\phi_{X}$ is replaced by a multivalued function
\[
\Phi_\Xb=\sum_{j=1}^\ell \phi_{X_j}
\] 
which satisfies $\nabla \Phi_\Xb = 2 \Ab^{\Xb}$. Moreover 
\[
\exp(\i\Phi_\Xb)= \prod_{j=1}^\ell  \exp(\i \phi_{X_j})
\] 
is uni-valued and belongs to  $C^\infty(\dot \Omega_{\Xb})$. As in the case of one AB pole, we can consider the (real) subspace of the $K_{\Xb}$-real functions in $L^2_{K_{\Xb}}(\dot{\Omega}_{\Xb})$, and our operator as an unbounded selfadjoint operator in $L^2_{K_{\Xb}}(\dot{\Omega}_{\Xb})$.

\subsubsection{Non-simply connected $\Omega$}
If $\Omega$ is not simply connected, we also accept some of the Aharonov--Bohm fluxes to be placed in holes in the bounded components of the complement of $\Omega$. If $\Omega$ has $m$ holes $\omega_1, \ldots, \omega_m$,  we allow $m'$ \enquote{odd} poles $X'_i$  (with $0\leq m' \leq m$ and $X'_i \in \omega_i$ for $i\in \{1,\dots,m'\}$).

We call \emph{odd holes} the holes for which we have introduced the poles $X'_j$.  In this case, we can reproduce the same construction. We call the remaining holes in $\Omega$ \emph{even}.

At the end, for $\widehat {\bf X} := \Xb\cup\Xb'$, we have constructed an AB Hamiltonian in  $\dot{\Omega}_{\Xb}$ associated with a magnetic potential $\Ab^{\widehat \Xb}$ with poles at $\Xb$ and half renormalized flux created by the magnetic potential in each odd hole, the flux created in the even hole being $0$. Similarly, we can as before use the \enquote{conjugation} operator $K_{\widehat{\Xb}}$.

It was shown in~\cite{HHOO,AFT} (inspired by the previous work  \cite{BeRu}) that the nodal set of such a $K_{\widehat \Xb}$-real eigenfunction has the same structure as the nodal set of a real-valued  eigenfunction of the Dirichlet Laplacian except that an odd number of half-lines meet at each pole (see Subsection~\ref{sec:oddevenpoints}), and that the number of lines meeting the interior boundary should be odd (resp. even) at the boundary of an odd (resp. even) hole.

\subsection{Equipartitions and nodal partitions of AB Hamiltonians}\label{Section5}
We start from constructions introduced in~\cite{HH:2005a,HH:2015b,BH}. Suppose $\Omega$ is a bounded, simply connected (thus, to simplify, we describe the case without holes),  domain and that $\partial\Omega$ is piecewise differentiable. Let $\mathcal D$ be a regular $k$-equipartition with energy $\Lambda(\mathcal D)=\mathfrak l_k(\Omega)$ satisfying the PCC.
  
We denote by $\Xb=X^{\mathrm{odd}}(\mathcal D)=\{X_j\}_{j=1}^\ell$ the critical points of the boundary set $\mathcal N(\mathcal D)$ of the partition for which an odd number of half-curves meet. For this family of points $\Xb$, $\mathfrak l_k(\Omega)$ is an eigenvalue of the AB Hamiltonian associated with $\dot{\Omega}_{\Xb}$, and we can explicitly construct the corresponding eigenfunction with $k$ nodal domains described by $\{D_i\}$.

In~\cite{HHOT} it was proven that there exists a family $\{u_i\}_{i=1}^k$ of functions such that $u_i$ is a ground state of $\laplace_{D_i}$ and $u_i -u_j$ is a second eigenfunction of $\laplace_{D_{ij}}$ when $D_i\sim D_j$ (here we have extended $u_i$ and $ u_j$ by $0$ outside of $D_i$ and $D_j$, respectively, and we recall that $D_{ij}=\Int(\overline{D_i\cup D_j})$).
  
The claim (see the proof of Theorem 5.1 (step 1) in~\cite{HH:2015b})  is that one can  find a sequence $\epsilon_i(x)$ of $\mathbb S^1$-valued functions, where $\epsilon_i$ is a suitable\footnote{\label{foot1}Note that by construction the $D_i$'s never contain any point of $\Xb$. By Euler formula a path $\gamma$ in $D_i$ can only contain an even number of points in $\Xb$. Hence the square root $D_i \ni x \mapsto \exp\bigl(\i\Phi_{\Xb}(x)/2\bigr)$ is well defined  up to a multiplicative constant of modulus $1$ and the ground state energy of the Dirichlet Laplacian $L_{D_i}$ is the same as the ground state energy of $H_{\Ab^{\Xb}}$ in $D_i$. See~\cite{Le}.} square root of  $\exp(\i\Phi_{\Xb})$ in $D_i$, such that $\sum_i \epsilon_i(x) u_i(x)$ is an eigenfunction of the AB Hamiltonian $T_{\Ab^{\Xb}}$ associated with the eigenvalue $\mathfrak l_k(\Omega)=\Lambda(\mathcal D)$.

\subsection{The Berkolaiko--Cox--Marzuola construction in the Aharonov--Bohm approach}
We follow the approach of \cite{BeCoMa} but to be able to treat non-admissible  partitions we introduce the AB Hamiltonian attached to the partition. Thus, let $\mathcal D$ be a $k$-partition in $\Omega$. We denote by $\Gamma=\mathcal N(\mathcal D)$ the boundary set of the partition in $\Omega$, and by $m_k$ the multiplicity of $\mathfrak l_k(\Omega)$ as eigenvalue of the magnetic AB Hamiltonian $T_{\Ab^{\Xb}}$, defined above.

We consider the family $\{\mathfrak B_\sigma\}_{\sigma \in \mathbb R}$ of sesquilinear forms defined on the magnetic Sobolev space $H_{0,\Ab^\Xb}^1(\Omega)\times H_{0,\Ab^\Xb}^1(\Omega)$ (see L\'ena~\cite{Le} and also~\cite{GoSc}) by
\[
(u,v)\mapsto \mathfrak t_\sigma (u,v) = \int_\Omega \nabla_{\Ab^\Xb}  u \cdot \overline{\nabla_{\Ab^\Xb} v} + \sigma \int_\Gamma u\, \overline{v} \,ds_\Gamma,
\]
where $\Ab^\Xb$ is the magnetic AB potential and $ds_\Gamma$ is the induced measure on (each arc of) $\Gamma$.

We should explain how the integral over $\Gamma$ is to be interpreted. For each arc $\gamma_i$ in $\Gamma\cap \dot\Omega_{\Xb}$, we can define in its neighborhood $V(\gamma_i)$ in $\dot \Omega_{\Xb}$ a $C^\infty$ square root $\exp(\i\Phi_{\Xb}/2)$ of $\exp(\i\Phi_{\Xb})$  and we have $\exp(\i\Phi_{\Xb}/2) u\in H^1(V(\gamma_i))$ if $u\in H^1_{\Ab^\Xb}(\Omega)$. We can then define
\[
\int_{\gamma_i} u\,\overline{v} \,ds_{\gamma_i} 
:= 
\int_{\gamma_i }(\exp(\i\Phi_{\Xb}/2) u) \cdot ( \overline{\exp(\i\Phi_{\Xb}/2) v})\, ds_{\gamma_i},
\]
where we use the standard trace for an element of $H^1$. Note that, with this definition, the \enquote{magnetic trace space} on $\gamma_i$ is identified as
\[
H^{1/2}_{\Ab^\Xb} (\gamma_i):= \exp(-\i\Phi_{\Xb}/2) H^{1/2} (\gamma_i).
\]
We further set $H^{1/2}_{\Ab^\Xb} (\Gamma):= \oplus_i H^{1/2}_{\Ab^\Xb} (\gamma_i)$, and writing 
\[
\int_\Gamma  u\, \overline{v} \,ds_\Gamma  
=\sum_i \int_{\gamma_i } u\, \overline{v} \,ds_{\gamma_i},
\]
this permits to show that the sesquilinear form  $\mathfrak t_\sigma$ is continuous.

Associated with this sesquilinear form we have the corresponding magnetic-Robin AB Hamiltonian $T_{\Ab^\Xb,\sigma}$  defined as the Friedrichs extension. We also define $T_{\Ab^\Xb,+\infty}$ as the corresponding AB magnetic Schr\"odinger  operator, with Dirichlet boundary conditions at $\partial\Omega\cup\Gamma$. We collect some properties of the operators $\{T_{\Ab^\Xb,\sigma}\}$.

\begin{proposition}\label{prop:transmission}
Assume that $\mathcal D$ is a weakly regular partition of $\Omega$.

The self-adjoint operators $\{T_{\Ab^\Xb,\sigma}\}$, $-\infty<\sigma\leq+\infty$, 
have compact resolvents.

Moreover, if $\sigma\in\mathbb R$, then the domain of $T_{\Ab^\Xb,\sigma}$ consists of all elements $u\in H^1_{0,\Ab^\Xb}(\Omega)$ such that $(\nabla_{\Ab^\Xb})^2u\in L^2(\Omega)$, and such that the following transmission conditions are satisfied:

If $D_i$ and $D_j$ are two neighbors in the partition $\mathcal D$ of $\Omega$, and $\gamma$ is a regular arc in $\partial D_i\cap \partial D_j$, then, on $\gamma$,
\begin{equation}
\label{eq:transmission}
\nu_i \cdot \nabla_{\Ab^\Xb} u_{i } + \nu_j \cdot \nabla_{\Ab^\Xb} u_{j} = -\sigma u\,,
\end{equation}
where $\nu_i$ and $\nu_j$ denote the exterior normals to $D_i$ and $D_j$ (at a point of $\gamma$) and $u_i$ and $u_j$ denote the restrictions of $u$ to $D_i$ and $D_j$, respectively.
\end{proposition}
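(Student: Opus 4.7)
The plan is to apply the standard theory of closed semibounded sesquilinear forms, following Kato's representation theorem. First I would verify that, for every $\sigma\in(-\infty,+\infty]$, the form $\mathfrak t_\sigma$ is closed, densely defined, and semibounded on its natural form domain (namely $H^1_{0,\Ab^\Xb}(\Omega)$ for finite $\sigma$, and its closed subspace of functions vanishing on $\Gamma$ when $\sigma=+\infty$). The key analytic input is a trace inequality of the form
\[
\int_\Gamma |u|^2\, ds_\Gamma \leq \delta \int_\Omega |\nabla_{\Ab^\Xb} u|^2\, dx + C_\delta \int_\Omega |u|^2\, dx, \qquad \delta>0,
\]
obtained by combining the standard $H^1$-trace inequality on each regular arc $\gamma_i\subset\Gamma$ (which is valid under weak regularity since near each critical point the transversal meeting property reduces the situation to a finite union of smooth sectors) with the diamagnetic inequality $|\nabla|u||\leq|\nabla_{\Ab^\Xb} u|$ a.e. This estimate absorbs the $\sigma$-term into the magnetic Dirichlet energy and yields both closedness and semiboundedness; Kato's theorem then produces the self-adjoint operator $T_{\Ab^\Xb,\sigma}$.

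For the compactness of the resolvent, it suffices to establish that the form domain embeds compactly into $L^2(\Omega)$. This is a standard fact for Aharonov--Bohm Hamiltonians with half-integer fluxes at finitely many interior points (compare L\'ena~\cite{Le} and~\cite{HHOO}): locally, away from the poles, a gauge transformation reduces $T_{\Ab^\Xb}$ to $-\Delta$, so Rellich--Kondrachov applies, while around each pole the quadratic form controls enough regularity via the diamagnetic inequality. For $\sigma=+\infty$ the form decouples as an orthogonal sum over the $D_i$, and the same reasoning applies componentwise.

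To identify the operator domain for finite $\sigma$, I would test the identity $\mathfrak t_\sigma(u,v)=\langle T_{\Ab^\Xb,\sigma}u,v\rangle_{L^2(\Omega)}$ against two classes of functions $v\in C_0^\infty(\dot\Omega_{\Xb})$. Taking $v$ compactly supported inside a single $D_i$ yields in the distributional sense $-(\nabla_{\Ab^\Xb})^2 u = T_{\Ab^\Xb,\sigma} u$ on each $D_i$, so $(\nabla_{\Ab^\Xb})^2 u\in L^2(\Omega)$. Next, taking $v$ supported in a tubular neighborhood of a regular arc $\gamma\subset\partial D_i\cap\partial D_j$ disjoint from all critical points of $\Gamma$ and from $\Xb$, and applying Green's identity separately on $D_i$ and $D_j$, one obtains
\[
\int_\gamma \bigl(\nu_i\cdot\nabla_{\Ab^\Xb} u_i + \nu_j\cdot\nabla_{\Ab^\Xb} u_j + \sigma u\bigr)\bar v\, ds = 0;
\]
varying $v$ produces the pointwise transmission identity~\eqref{eq:transmission}.

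The main obstacle is the behavior at the singular points --- the odd vertices of $\Gamma$ and the AB poles $\Xb$. At the AB poles, the Friedrichs construction from $C_0^\infty(\dot\Omega_{\Xb})$ automatically imposes the correct Dirichlet-type behavior on form-domain functions, and test functions supported away from $\Xb$ are dense, so the integration-by-parts step is unaffected. At the critical points of $\Gamma$, weak regularity ensures a neighborhood is diffeomorphic to a finite union of smooth sectors; a partition-of-unity argument localized away from these finitely many points reduces all trace and Green identities to the smooth case. Since the proposition only asserts the transmission condition on the regular arcs, no additional vertex condition needs to be extracted.
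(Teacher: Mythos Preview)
Your proposal is correct and follows essentially the same route as the paper, which simply refers to Arendt--Mazzeo~\cite{AM0} for the form-theoretic construction and compactness, and to~\cite{BeCoMa} for the transmission conditions; you have spelled out the standard arguments those references contain. The one minor difference is that the paper handles the AB poles by invoking L\'ena's result~\cite[Corollary~2.5]{Le} that $H^1_{\Ab^\Xb}(\Omega)\hookrightarrow H^1(\Omega)$ continuously for non-integer fluxes, whereas you reach the same conclusions via the diamagnetic inequality---either tool suffices here.
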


\begin{proof}
The proof follows in the same manner as for the Laplace operator, with small additions or modifications. We refer to \cite[Proposition~2.2]{AM0} for the characterization of the domain, and the compactness of the resolvent. Here, one should note that the magnetic Sobolev space $H^1_{\Ab^\Xb}(\Omega)$ is continuously embedded in the ordinary Sobolev space $H^1(\Omega)$ if the fluxes around the poles are non-integers (see~\cite[Corollary~2.5]{Le}).

For the transmission conditions along the boundary set, we refer to~\cite{BeCoMa}.
\end{proof}

Given $-\infty<\sigma\leq+\infty$, we denote by $\{\hat\lambda_n(\sigma)\}_{n\in\mathbb N}$ the analytic eigenvalue branches of $L_\sigma$, and we enumerate by $\{\lambda_n(\sigma)\}_{n\in\mathbb N}$ the increasing sequence of eigenvalues of $L_\sigma$, counted with multiplicity. As in~\cite[Lemma 2]{BeCoMa}, a perturbative argument shows that $\sigma\mapsto \hat\lambda_n(\sigma)$ is either strictly increasing or equal to $\hat\lambda_n(0)$, and the latter case only occurs when $\hat\lambda_n(0)$ is an eigenvalue of $L_{+\infty}$.

\begin{proposition}
As $\sigma\to+\infty$, $\lambda_n(\sigma)\to\lambda_n(+\infty)$.
\end{proposition}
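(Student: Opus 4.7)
The plan is to combine monotonicity with a standard min-max compactness argument, exactly as in the non-magnetic case, using that the magnetic Sobolev space $H^1_{0,\Ab^\Xb}(\Omega)$ embeds continuously into $H^1_0(\Omega)$ (the fluxes are $1/2$, hence non-integer, so Léna's Corollary~2.5 applies). In particular, traces on $\Gamma$ are well-defined and the map $u\mapsto u|_\Gamma$ is compact from $H^1_{0,\Ab^\Xb}(\Omega)$ into $L^2(\Gamma)$.

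First I would establish the trivial inequality $\lambda_n(\sigma)\le\lambda_n(+\infty)$. Since $\sigma\mapsto\mathfrak t_\sigma(u,u)$ is non-decreasing, the min-max principle immediately gives that $\sigma\mapsto\lambda_n(\sigma)$ is non-decreasing. Moreover, the form domain of $T_{\Ab^\Xb,+\infty}$ can be identified with $V_\infty:=\{u\in H^1_{0,\Ab^\Xb}(\Omega)\,:\,u|_\Gamma=0\}$, and for every $u\in V_\infty$ one has $\mathfrak t_\sigma(u,u)=\mathfrak t_0(u,u)$. Restricting the min-max for $\lambda_n(\sigma)$ to $n$-dimensional subspaces of $V_\infty$ therefore yields $\lambda_n(\sigma)\le\lambda_n(+\infty)$. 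Consequently $\mu_n:=\lim_{\sigma\to+\infty}\lambda_n(\sigma)$ exists and satisfies $\mu_n\le\lambda_n(+\infty)$.

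For the reverse inequality I would pick an arbitrary sequence $\sigma_k\to+\infty$ and, for each $k$, an $L^2$-orthonormal set $u_{1,k},\dots,u_{n,k}$ of eigenfunctions of $T_{\Ab^\Xb,\sigma_k}$ associated with $\lambda_1(\sigma_k),\dots,\lambda_n(\sigma_k)$. Every $v\in V_{n,k}:=\operatorname{span}(u_{1,k},\dots,u_{n,k})$ with $\|v\|_{L^2}=1$ satisfies
\[
\mathfrak t_0(v,v)\le\mathfrak t_{\sigma_k}(v,v)\le\lambda_n(\sigma_k)\le\lambda_n(+\infty),
\qquad \sigma_k\int_\Gamma|v|^2\,ds\le\lambda_n(+\infty).
\]
Applied to each $u_{j,k}$, the first bound gives a uniform $H^1_{\Ab^\Xb}$ bound, hence a uniform $H^1$ bound; the second forces $\|u_{j,k}|_\Gamma\|_{L^2(\Gamma)}\to 0$. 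By the compact embeddings $H^1\hookrightarrow L^2(\Omega)$ and $H^1\to L^2(\Gamma)$, a diagonal extraction produces $u_{j,k}\rightharpoonup u_j$ in $H^1_{\Ab^\Xb}(\Omega)$, strongly in $L^2(\Omega)$, and with $u_j|_\Gamma=0$. Thus the limits $u_1,\dots,u_n$ are $L^2$-orthonormal and lie in $V_\infty$, so they span an $n$-dimensional subspace $V_n\subset V_\infty$.

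For any $u=\sum_{j=1}^n c_ju_j\in V_n$ with $\|u\|_{L^2}=1$, the corresponding $v_k=\sum c_j u_{j,k}$ has $\|v_k\|_{L^2}\to 1$ and $v_k\rightharpoonup u$ in $H^1_{\Ab^\Xb}(\Omega)$, so the weak lower semicontinuity of the non-negative form $\mathfrak t_0$ gives
\[
\mathfrak t_0(u,u)\le\liminf_{k\to\infty}\mathfrak t_0(v_k,v_k)\le\liminf_{k\to\infty}\mathfrak t_{\sigma_k}(v_k,v_k)\le\liminf_{k\to\infty}\lambda_n(\sigma_k)=\mu_n.
\]
Taking the supremum over $u\in V_n$ and applying the min-max characterisation of $\lambda_n(+\infty)$ over $V_\infty$ yields $\lambda_n(+\infty)\le\mu_n$, concluding the proof.

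The main obstacle is the justification that the passage to the limit respects the Dirichlet trace on $\Gamma$, i.e.\ that $u_j|_\Gamma=0$; this is where the compact trace embedding matters, and it is the reason we invoked Léna's non-integer flux result rather than working with magnetic traces directly. The monotonicity and min-max parts are otherwise essentially identical to the scalar case \cite[Proposition~2.3]{BeCoMa}.
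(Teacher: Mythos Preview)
Your argument is correct. The monotonicity bound $\lambda_n(\sigma)\le\lambda_n(+\infty)$ is immediate, and the compactness step is cleanly organised: the uniform $H^1_{\Ab^\Xb}$ bound, the decay of the $L^2(\Gamma)$ traces forced by $\sigma_k\to+\infty$, the Rellich-type extraction, and the weak lower semicontinuity of $u\mapsto\|\nabla_{\Ab^\Xb}u\|_{L^2}^2$ all go through as you describe. One cosmetic point: since the $u_{j,k}$ are $L^2$-orthonormal for each fixed $k$, you in fact have $\|v_k\|_{L^2}=1$ exactly, not just asymptotically.

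The paper, however, does not argue this way. It invokes norm resolvent convergence of $T_{\Ab^\Xb,\sigma}$ to $T_{\Ab^\Xb,+\infty}$ as $\sigma\to+\infty$, referring to Arendt--Mazzeo \cite[Proposition~2.6]{AM0} for the analogous Laplacian statement, and then deduces eigenvalue convergence from general spectral theory \cite[Proposition~2.8]{AM0}. So the two approaches differ in spirit: the paper imports an abstract operator-theoretic result (resolvent convergence $\Rightarrow$ spectral convergence), whereas you give a self-contained variational argument tailored to the monotone family of forms. Your route is more elementary and makes the role of the trace compactness explicit; the paper's route is shorter on the page but hides the analysis in the cited references. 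Both ultimately rely on the same compactness input (the embedding $H^1_{\Ab^\Xb}\hookrightarrow H^1$ from L\'ena), so neither is strictly more general here.
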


\begin{proof}
The resolvents of $T_{\Ab^\Xb,\sigma}$ converge to the resolvent of $T_{\Ab^\Xb,+\infty}$ as $\sigma\to+\infty$, see~\cite[Proposition~2.6]{AM0} for the proof in the case of the Laplacian. It then follows (see~\cite[Proposition~2.8]{AM0}) that the eigenvalues also converge. 
\end{proof}

The operator $T_{\Ab^\Xb,+\infty}$ can be identified as the direct sum of the AB magnetic Schr\"odinger operators with vector potential $\Ab^\Xb$ on each component $D_i$ of the partition $\mathcal D$, with Dirichlet boundary conditions on $\partial D_i$. On each $D_i$, the magnetic potential can be gauged away by a square root $\exp \bigl(\i \Phi_\Xb /2\bigr)$ which can be defined as a univalued function in each $D_i$  (see footnote~\ref{foot1}). Moreover $\exp \bigl(\i \Phi_\Xb /2\bigr)$ is unique up to a multiplicative constant of modulus~$1$.  We denote by $\Theta_\Xb^i$ a choice of this square root in $D_i$ and observe that $\Theta_\Xb^i$ can be extended to $\overline{D_i}$ except at the singular points of $\partial D_i$. 

From the gauge transforms above it follows that $\mathfrak l_k$ is the smallest eigenvalue of the operator $T_{\Ab^\Xb,+\infty}$.

\subsection{The magnetic Dirichlet-to-Neumann operator}\label{sec:magneticDN}
We can now construct the magnetic Dirichlet-to-Neumann operator. For this we proceed in the following way. For each $D_j$, we consider $\partial D_j \cap \Omega$. We introduce  the  magnetic Dirichlet--Neumann operator on $\partial D_j$ which associates, for  $\lambda >\mathfrak l_k$  small enough, to a  function $h\in H^{1/2}_{\Ab^\Xb}(\partial D_j)$, vanishing on $\partial \Omega \cap \partial D_j$ a solution $u$ to
\begin{equation}\label{mdn}
\begin{cases}
 -(\nabla_{\Ab^\Xb})^2  u= \lambda u & \text{in }D_j,\\
u=h & \text{on } \partial D_j\,.
\end{cases}
\end{equation}
Even though it is not our case, we start by explaining the procedure in the case that $\Ab^\Xb$ is replaced by a magnetic vector potential $\Ab$ that is regular. Then we define a pairing of elements in $H^{-1/2}_\Ab(\partial D_j)$ and $H^{1/2}_\Ab(\partial D_j)$, inspired by how it is done in the non-magnetic case by the Green--Riemann formula.

Observing that $\lambda$ is not an eigenvalue of the Dirichlet realization of $T_{\Ab}$ in $D_j$, the map $h\mapsto u$ (where $u$ is a solution of~\eqref{mdn}) is continuous from $H^{1/2}_{\Ab} (\partial D_j)$ into $H^1_\Ab ( D_j)$ and we have for some positive constant $C$
\begin{equation} \label{eq:conta}
\|u\|_{H^1_{\Ab}(D_j)} \leq C \|h\|_{H^{1/2}_{\Ab}(\partial D_j)}\,.
\end{equation}
We consider a second pair $(\tilde u,\tilde h)$ solution of \eqref{mdn} and set 
\begin{equation}
\label{eq:defpairing}
\begin{aligned}
\bigl\langle \nu_j\cdot\nabla_\Ab u,\tilde h\bigr\rangle_{H^{-1/2}_{\Ab}(\partial D_j),H^{1/2}_{\Ab}(\partial D_j)}
&:=
\langle \nabla_\Ab u,\nabla_\Ab \tilde u\rangle_{L^2(D_j)}
+
\langle (\nabla_\Ab)^2u,\tilde u\rangle_{L^2(D_j)} \\
& = 
\langle \nabla_\Ab u,\nabla_\Ab \tilde u\rangle_{L^2(D_j)}
- \lambda 
\langle u,\tilde u\rangle_{L^2(D_j)},
\end{aligned}
\end{equation}
where $\nu_j$ is  the exterior normal derivative to $\partial D_j$.

From this formula, we see, using~\eqref{eq:conta} for the pairs $(u,h)$ and $(\tilde u,\tilde h)$, that there exists a positive $C$ such that
\[
|\bigl\langle \nu_j\cdot\nabla_\Ab u,\tilde h\bigr\rangle_{H^{-1/2}_{\Ab}(\partial D_j),H^{1/2}_{\Ab}(\partial D_j)}| \leq C \| h\|_{H^{1/2}_{\Ab} (\partial D_j)}\, \|\tilde h\|_{H^{1/2}_{\Ab} (\partial D_j)}
\]
This shows the continuity of the Dirichlet to Neumann magnetic operator  from $H^{1/2}_{ \Ab} (\partial D_j)$ into its dual.

We now turn to our singular case with $\Ab=\Ab^\Xb$. Actually, to avoid possible problems with the singularities of $\Ab^\Xb$, we will see that we can transfer the situation in each $D_i$ to the non-magnetic case.

We start by looking at one of the partitions $D_i$. We assume that $\lambda\in\mathbb R$ is not in the spectrum of the Dirichlet realization of $-(\nabla_{\Ab^\Xb})^2$ in $D_i$, and that $(u,h)$ satisfies
\begin{equation}\label{mdn2}
  \begin{cases}
    -(\nabla_{\Ab^\Xb})^2u=\lambda u & \text{in $D_i$},\\
    u=h & \text{on $\partial D_i$}.
  \end{cases}
\end{equation}
Let $v=(\Theta_\Xb^i)^{-1}u$. Then, since $\nabla_{\Ab^\Xb}=\Theta_\Xb^i\nabla (\Theta_\Xb^i)^{-1}$, we find that $v$ satisfies
\[
  \begin{cases}
    -\nabla^2 v=\lambda v & \text{in $D_i$},\\
    v=(\Theta_\Xb^i)^{-1} h & \text{on $\partial D_i$}.
  \end{cases}
\]
Inspired by this, we define our magnetic trace space $H^{1/2}_{\Ab^\Xb}(\partial D_i)$ by saying that $h\in H^{1/2}_{\Ab^\Xb}(\partial D_i)$ precisely when $(\Theta_\Xb^i)^{-1} h$ belongs to $H^{1/2}(\partial D_i)$. Moreover, if this is the case, then the normal derivative $\nu_i\cdot \nabla((\Theta_\Xb^i)^{-1} u)$ is also well-defined as an element of $H^{-1/2}(\partial D_i)$, and it is natural to define on $\partial D_i$ the magnetic normal derivative of $u$ as
\[
  \nu_i \cdot \nabla_{\Ab^\Xb} u|_{\partial D_i}
    :=
    \Theta_\Xb^i\bigl(\nu_i\cdot\nabla((\Theta_\Xb^i)^{-1}\,u)\bigr).
\]
Thus, given boundary data $h\in H^{1/2}_{\Ab^\Xb}(\partial D_i)$, we define the reduced magnetic Dirichlet-to-Neumann operator $\DN_{i,\Ab^\Xb}(\lambda)$ by first identifying $u$ such that the pair $(u,h)$ satisfies~\eqref{mdn2}, and then applying the magnetic normal derivative. This provides us with the following definition
\begin{equation}\label{eq:mbd}
    \DN_{i,\Ab^\Xb}(\lambda)h
    =
    \nu_i \cdot \nabla_{\Ab^\Xb} u|_{\partial D_i}
    =
    \Theta_\Xb^i\bigl(\nu_i\cdot\nabla((\Theta_\Xb^i)^{-1}\,u)\bigr).
\end{equation}
Moreover, given another pair $(\tilde u,\tilde h)$ satisfying~\eqref{mdn2} we define the pairing of $\DN_{i,\Ab^\Xb}(\lambda)h$ with $\tilde h$ as
\[
  \begin{multlined}
  \bigl\langle \DN_{i,\Ab^\Xb}(\lambda)h,\tilde h\bigr\rangle_{H^{-1/2}_{\Ab^\Xb}(\partial D_i),H^{1/2}_{\Ab^\Xb}(\partial D_i)}\\
  :=
  \bigl\langle \nu_i\cdot \nabla( (\Theta_\Xb^i)^{-1} \, u),(\Theta_\Xb^i)^{-1}\,\tilde h\bigr\rangle_{H^{-1/2}(\partial D_i),H^{1/2}(\partial D_i)}.
  \end{multlined}
\]
We are now ready to define the magnetic Dirichlet-to-Neumann operator $\DN_{\mathcal D,\Ab^{\Xb}}(\lambda)$. We recall that $R_{\Gamma\shortto \partial D_i}$ denotes the operator that first extends by $0$ to $\Gamma\cup\partial\Omega$ and then restricts to $\partial D_i$, and that $R_{\partial D_i\shortto\Gamma}$ is the operator that first extends an element defined on $\partial D_i$ by $0$ so that it becomes defined on $\Gamma\cup\partial\Omega$, and then restricts the result to $\Gamma$.

We define the space $H^{1/2}_{\Ab^\Xb}(\Gamma)$ to be the set of $h$ defined on $\Gamma$ such that $R_{\Gamma\shortto\partial D_i}h$ belongs to $H^{1/2}_{\Ab^\Xb}(\partial D_i)$ for each $i$, $1\leq i\leq k$. Applying $\DN_{i,\Ab^\Xb}(\lambda)$ to $R_{\Gamma\shortto\partial D_i}h$, we end up with an element in $H^{-1/2}_{\Ab^\Xb}(\partial D_i)$. If $\tilde h$ is another element of $H^{1/2}_{\Ab^\Xb}(\Gamma)$, we can pair the result obtained with $R_{\Gamma\shortto\partial D_i}\tilde h$,
\[
  \bigl\langle \DN_{i,\Ab^\Xb}(\lambda)R_{\Gamma\shortto\partial D_i}h,R_{\Gamma\shortto\partial D_i}\tilde h\bigr\rangle_{H^{-1/2}_{\Ab^\Xb}(\partial D_i),H^{1/2}_{\Ab^\Xb}(\partial D_i)}
\]
We next define
\[
  \DN_{\mathcal D,\Ab^{\Xb}}(\lambda)\colon H^{1/2}_{\Ab^\Xb} (\Gamma)\to H^{-1/2}_{\Ab^\Xb} (\Gamma).
\]
as the sum
\begin{equation}\label{eq:4.5}
\DN_{\mathcal D,\Ab^{\Xb}}(\lambda)
=
\sum_{i=1}^k R_{\partial D_i\shortto\Gamma} \, \DN_{i,\Ab^\Xb}(\lambda)\, R_{\Gamma\shortto \partial D_i}.
\end{equation}
Defining the pairing between $H^{1/2}_{\Ab^\Xb}(\Gamma)$ and $H^{-1/2}_{\Ab^\Xb}(\Gamma)$ by summing over $i$ the pairing relative to each $D_i$, we get  for the Dirichlet-to-Neumann operator:
\[
\begin{multlined}
  \bigl\langle\DN_{\mathcal D,\Ab^{\Xb}}(\lambda)h,\tilde h\big\rangle_{H^{-1/2}_{\Ab^\Xb}(\Gamma),H^{1/2}_{\Ab^\Xb}(\Gamma)}\\
  : =   \sum_{i=1}^k\bigl\langle \DN_{i,\Ab^\Xb}(\lambda)R_{\Gamma\shortto\partial D_i}h,R_{\Gamma\shortto\partial D_i}\tilde h\bigr\rangle_{H^{-1/2}_{\Ab^\Xb}(\partial D_i),H^{1/2}_{\Ab^\Xb}(\partial D_i)},\\
  \forall h, \tilde h \mbox{ in } H^{1/2}_{\Ab^\Xb}(\Gamma)\,.
  \end{multlined}
\]

\begin{proposition}\label{prop:DNsa}
The operator $\DN_{\mathcal D,\Ab^{\Xb}}(\lambda)$ is self-adjoint.
\end{proposition}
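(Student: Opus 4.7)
The plan is to reduce self-adjointness piece by piece to the non-magnetic case via the local gauges $\Theta_\Xb^i$, and then to identify $\DN_{\mathcal D,\Ab^\Xb}(\lambda)$ as the operator associated with a closed Hermitian sesquilinear form on $L^2(\Gamma)$ via Kato's representation theorem.

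First, I would fix $i$ and observe that on $D_i$ multiplication by $(\Theta_\Xb^i)^{-1}$ is a unitary map of $L^2(D_i)$ intertwining the Dirichlet realization of $-(\nabla_{\Ab^\Xb})^2$ with the Dirichlet Laplacian $-\Delta$. By the very definition of $H^{1/2}_{\Ab^\Xb}(\partial D_i)$ and of the magnetic normal derivative in~\eqref{eq:mbd}, this gauge identifies $\DN_{i,\Ab^\Xb}(\lambda)$ with the classical Dirichlet-to-Neumann operator $\DN_{D_i}(\lambda)$, whose self-adjointness on $L^2(\partial D_i)$ and whose Green-type quadratic representation are standard (see \cite{AM}). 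Transporting back along the gauge yields, for any pairs $(u,h)$, $(\tilde u,\tilde h)$ of solutions of~\eqref{mdn2},
\[
  \bigl\langle\DN_{i,\Ab^\Xb}(\lambda)h,\tilde h\bigr\rangle
  = \int_{D_i}\nabla_{\Ab^\Xb} u\cdot\overline{\nabla_{\Ab^\Xb}\tilde u}\,dx
  - \lambda\int_{D_i} u\,\overline{\tilde u}\,dx,
\]
which is manifestly Hermitian-symmetric under swapping $(h,\tilde h)\leftrightarrow(\tilde h,h)$. Summing over $i$ and using that $R_{\partial D_i\shortto\Gamma}$ and $R_{\Gamma\shortto\partial D_i}$ are formal adjoints, this Hermitian symmetry transfers to the full pairing~\eqref{eq:4.5} on the dense subspace $H^{1/2}_{\Ab^\Xb}(\Gamma)\subset L^2(\Gamma)$, and $\DN_{\mathcal D,\Ab^\Xb}(\lambda)$ is symmetric.

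To upgrade symmetry to self-adjointness I would apply Kato's representation theorem to the Hermitian form
\[
  q(h,\tilde h) := \sum_{i=1}^k\Bigl(\int_{D_i}\nabla_{\Ab^\Xb} u_i\cdot\overline{\nabla_{\Ab^\Xb}\tilde u_i}\,dx - \lambda\int_{D_i} u_i\,\overline{\tilde u_i}\,dx\Bigr),
\]
on the form domain $H^{1/2}_{\Ab^\Xb}(\Gamma)$, where $u_i,\tilde u_i$ are the solutions of $-(\nabla_{\Ab^\Xb})^2 u_i=\lambda u_i$ in $D_i$ with boundary data $R_{\Gamma\shortto\partial D_i} h$ and $R_{\Gamma\shortto\partial D_i}\tilde h$. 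Continuity and closedness of $q$ follow piecewise from the estimate~\eqref{eq:conta} applied to each magnetic-harmonic extension, together with the embedding $H^1_{\Ab^\Xb}(D_i)\hookrightarrow H^1(D_i)$ granted by the half-integer flux condition (cf.\ \cite[Corollary~2.5]{Le}), which in turn gives compactness of $H^1_{\Ab^\Xb}(D_i)\hookrightarrow L^2(D_i)$. Since $\lambda$ is not in the Dirichlet spectrum of any $D_i$, the form is semibounded, and Kato's theorem then produces a unique self-adjoint operator on $L^2(\Gamma)$ whose quadratic form coincides with $\DN_{\mathcal D,\Ab^\Xb}(\lambda)$.

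The main obstacle I foresee lies in handling the odd critical points on $\Gamma$, where several $\partial D_i$'s meet at an Aharonov--Bohm pole in $\Xb$: there each $\Theta_\Xb^i$ exhibits a branch-type behaviour along the arcs, but the phases on neighbouring pieces must combine coherently for $H^{1/2}_{\Ab^\Xb}(\Gamma)$ to be a well-defined global space, dense in $L^2(\Gamma)$, and for the piecewise magnetic-harmonic extension to be jointly continuous into $H^1_{\Ab^\Xb}(\Omega\setminus\Xb)$. Verifying this requires the local analysis near the AB poles developed in~\cite{HHOO,AFT,Le}, after which the argument above goes through.
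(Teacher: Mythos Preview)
Your proposal is correct and follows essentially the same route as the paper: both reduce, via the local gauges $\Theta_\Xb^i$, to the non-magnetic Green--Riemann identity on each $D_i$, arriving at the symmetric expression $\langle\nabla_{\Ab^\Xb}u,\nabla_{\Ab^\Xb}\tilde u\rangle-\lambda\langle u,\tilde u\rangle$ and then summing over $i$. The only difference is one of completeness: the paper stops at this Hermitian symmetry and defers the passage to genuine self-adjointness to the known non-magnetic result (cf.\ the remark preceding the proof), whereas you spell out the Kato representation theorem argument and flag the local analysis near the AB poles; this extra care is welcome but not a different strategy.
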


\begin{remark}
For non-singular magnetic potentials, one usually bases the proof of self-adjointness on the magnetic Green--Riemann formula, i.e. the first equality in~\eqref{eq:defpairing}. For our singular potentials, since we define our magnetic Neumann-to-Dirichlet operator in terms of the non-magnetic one, the statement follows by the corresponding statement for the non-magnetic Dirichlet-to-Neumann operator, relying on the Green--Riemann formula.
\end{remark}

\begin{proof}
Let $h$ and $\tilde h$ be in $H^{1/2}_{\Ab^\Xb}(\Gamma)$. By the construction, it is sufficient to show that, for all $i$ with $1\leq i\leq k$,
\begin{equation}\label{eq:DNsa}
  \begin{multlined}
    \bigl\langle \DN_{i,\Ab^\Xb}(\lambda)R_{\Gamma\shortto\partial D_i}h,R_{\Gamma\shortto\partial D_i}\tilde h\bigr\rangle_{H^{-1/2}_{\Ab^\Xb}(\partial D_i),H^{1/2}_{\Ab^\Xb}(\partial D_i)}\\
    =
    \bigl\langle R_{\Gamma\shortto\partial D_i}h,\DN_{i,\Ab^\Xb}(\lambda)R_{\Gamma\shortto\partial D_i}\tilde h\bigr\rangle_{H^{-1/2}_{\Ab^\Xb}(\partial D_i),H^{1/2}_{\Ab^\Xb}(\partial D_i)}.
  \end{multlined}
\end{equation}
Let $u$ and $\tilde u$ be elements of $H^1_{\Ab^\Xb}(D_i)$ such that the pairs $(u,R_{\Gamma\shortto\partial D_i}h)$ and $(\tilde u,R_{\Gamma\shortto\partial D_i}\tilde h)$ satisfy~\eqref{mdn2}. The left-hand side in~\eqref{eq:DNsa} is by definition equal to
\[
\bigl\langle \nu_j\cdot \nabla( (\Theta_\Xb^i)^{-1} \, u), (\Theta_\Xb^i)^{-1}\,R_{\Gamma\shortto\partial D_i}\tilde h\bigr\rangle_{H^{-1/2}(\partial D_i),H^{1/2}(\partial D_i)},
\]
which by the Green--Riemann formula for the Laplace operator equals
\[
\bigl\langle \nabla ((\Theta_\Xb^i)^{-1}\, u),\nabla ((\Theta_\Xb^i)^{-1}\, \tilde u)\bigr\rangle_{L^2(\partial D_i)}
+
\bigl\langle \nabla^2 ((\Theta_\Xb^i)^{-1}\, u),(\Theta_\Xb^i)^{-1}\, \tilde u\rangle_{L^2(\partial D_i)}.
\]
Next, we use that $\nabla_{\Ab^\Xb}=\Theta_\Xb^i\nabla (\Theta_\Xb^i)^{-1}$ and that $\Theta_\Xb^i$ has modulus one, to write this as
\[
\bigl\langle \nabla_{\Ab^\Xb}u,\nabla_{\Ab^\Xb}\tilde u\bigr\rangle_{L^2(\partial D_i)}
+
\bigl\langle (\nabla_{\Ab^\Xb})^2  u), \tilde u\rangle_{L^2(\partial D_i)}.
\]
Since $(\nabla_{\Ab^\Xb})^2u=-\lambda u$ in $D_i$, we end up with
\[
\bigl\langle \nabla_{\Ab^\Xb}u,\nabla_{\Ab^\Xb}\tilde u\bigr\rangle_{L^2(\partial D_i)}
-
\lambda\bigl\langle u, \tilde u\rangle_{L^2(\partial D_i)}.
\]
This expression is symmetric in $u$ and $\tilde u$, so we will obtain it again if we start with the right-hand side of~\eqref{eq:DNsa} and exchange the roles of $(u,R_{\Gamma\shortto\partial D_i}h)$ and $(\tilde u,R_{\Gamma\shortto\partial D_i}\tilde h)$.
\end{proof}

\subsection{Proof of Theorem~\ref{thm:main}}\label{sec:proofofmainthm}
Following~\cite{BeCoMa}, we denote by $\Mor\bigl(\DN_{\mathcal D,\Ab^{\Xb}}(\lambda)\bigr)$ the number of negative eigenvalues of $\DN_{\mathcal D,\Ab^{\Xb}}(\lambda)$, including counting multiplicities. We introduce the defect $\Def(\mathcal D)$ of the partition $\mathcal D$ as
\[
\Def(\mathcal D):=\ell(\mathcal D)-k(\mathcal D),
\]
where $\ell(\mathcal D)$ denotes the minimal labelling of the eigenvalue $\mathfrak l_k$ of the AB Hamiltonian $T_{\Ab^\Xb}$, and $k=k(\mathcal D)$ is the number of components of the partition~$\mathcal D$. We are ready to state our main result, and for simplicity we do it for simply connected domains $\Omega$ (the only change for the general case would be in the definition of the magnetic Aharonov--Bohm potential). We restate Theorem~\ref{thm:main}:

\begingroup
\def\thetheorem{\ref{thm:main}}
\begin{theorem}
  Let $\mathcal D$ be a regular $k$-equipartition of  a simply connected domain~$\Omega$ satisfying the PCC with energy $\mathfrak l_k=\mathfrak l_k(\Omega)$. 
  Let $\Ab^{\Xb}$ be the associated Aharonov--Bohm potential.
  Then, for sufficiently small $\epsilon >0$, 
  \[
  \Def(\mathcal D)= 1 - \dim \ker (T_{\Ab^\Xb} -\mathfrak l_k)+ \Mor\bigl(\DN_{\mathcal D,\Ab^\Xb}(\mathfrak l_k+\epsilon)\bigr).
  \]
  \end{theorem}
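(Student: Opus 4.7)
The plan is to imitate the spectral-flow argument of Berkolaiko--Cox--Marzuola in our magnetic setting, working with the one-parameter family $\{T_{\Ab^\Xb,\sigma}\}_{\sigma\in[0,+\infty]}$ introduced above and tracking its analytic eigenvalue branches $\hat\lambda_n(\sigma)$. The argument splits naturally into a \emph{spectral correspondence} identifying negative eigenvalues of $\DN_{\mathcal D,\Ab^\Xb}(\mathfrak l_k+\epsilon)$ with values of $\sigma>0$ at which the horizontal line $\lambda=\mathfrak l_k+\epsilon$ is hit by some analytic branch, together with a \emph{counting lemma} of the form
\[
\Mor\bigl(\DN_{\mathcal D,\Ab^\Xb}(\mathfrak l_k+\epsilon)\bigr)=N(0,\mathfrak l_k+\epsilon)-N(+\infty,\mathfrak l_k+\epsilon),
\]
where $N(\sigma,E):=\#\{n:\hat\lambda_n(\sigma)<E\}$ counts branches strictly below $E$ at parameter~$\sigma$.

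For the correspondence, suppose $T_{\Ab^\Xb,\sigma}u=\lambda u$ with $\lambda$ not in the Dirichlet spectrum of any $D_i$; then $u|_{D_i}$ is the unique solution of the magnetic Dirichlet problem with data $u|_{\partial D_i}$, so $\DN_{i,\Ab^\Xb}(\lambda)$ produces the magnetic normal derivative on $\partial D_i$. Summing contributions from the two sides of each regular arc of $\Gamma$ and invoking the transmission condition \eqref{eq:transmission} gives $\DN_{\mathcal D,\Ab^\Xb}(\lambda)(u|_\Gamma)=-\sigma\,u|_\Gamma$; conversely, a $\DN$-eigenvector at eigenvalue $-\sigma$ can be lifted by solving the Dirichlet problems inside each $D_i$, with the transmission condition precisely ensuring that the resulting $u$ lies in the domain of $T_{\Ab^\Xb,\sigma}$. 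For the counting lemma, I would use the monotonicity/dichotomy for the analytic branches (strictly increasing in $\sigma$, or constant), obtained by adapting \cite[Lemma~2]{BeCoMa} to the magnetic setting via the gauges $\Theta_\Xb^i$ of Subsection~\ref{sec:magneticDN}: each branch then meets the level $\mathfrak l_k+\epsilon$ at most at one $\sigma_*\in(0,+\infty)$, and the total number of such intersections (with multiplicity) is exactly the decrease of $N$ from $\sigma=0$ to $\sigma=+\infty$, which by the correspondence equals the Morse index of $\DN_{\mathcal D,\Ab^\Xb}(\mathfrak l_k+\epsilon)$.

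It remains to evaluate the two endpoint counts, assuming $\epsilon$ is small enough that $(\mathfrak l_k,\mathfrak l_k+\epsilon)$ meets neither the spectrum of $T_{\Ab^\Xb}$ nor that of $T_{\Ab^\Xb,+\infty}$. By the very definition of $\ell(\mathcal D)$ as the minimal label of $\mathfrak l_k$ in the spectrum of $T_{\Ab^\Xb}$ we have $N(0,\mathfrak l_k+\epsilon)=\ell(\mathcal D)-1+\dim\ker(T_{\Ab^\Xb}-\mathfrak l_k)$. On the other hand, $T_{\Ab^\Xb,+\infty}$ is the orthogonal direct sum of the AB--Dirichlet operators on the $D_i$'s, and each summand is unitarily equivalent via $\Theta_\Xb^i$ to the ordinary Dirichlet Laplacian on $D_i$, which (by the equipartition assumption and simplicity of the Dirichlet ground state on a connected domain) has simple first eigenvalue $\lambda_1(D_i)=\mathfrak l_k$; hence $N(+\infty,\mathfrak l_k+\epsilon)=k$. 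Subtracting the two counts yields $\ell(\mathcal D)-1+\dim\ker(T_{\Ab^\Xb}-\mathfrak l_k)-k$, which equals $\Def(\mathcal D)-1+\dim\ker(T_{\Ab^\Xb}-\mathfrak l_k)$, as asserted.

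The main obstacle I anticipate is making the spectral correspondence rigorous \emph{with multiplicities} in the presence of the singular magnetic potential. The poles of $\Ab^\Xb$ sit precisely at the odd critical points of $\Gamma$, and one must verify that every element of $H^{1/2}_{\Ab^\Xb}(\Gamma)$ in the kernel of $\DN_{\mathcal D,\Ab^\Xb}(\lambda)+\sigma$ lifts across those singular points to a genuine $H^1_{0,\Ab^\Xb}(\Omega)$-eigenfunction of $T_{\Ab^\Xb,\sigma}$. The local gauges $\Theta_\Xb^i$ together with the identification $H^{1/2}_{\Ab^\Xb}(\partial D_i)=\Theta_\Xb^i\,H^{1/2}(\partial D_i)$ of Subsection~\ref{sec:magneticDN} are designed to reduce the analysis piece-by-piece to the non-magnetic setting of \cite{BeCoMa,AM}, but the final gluing along $\Gamma$ near the odd points must be checked by hand, using the structure of $\Gamma$ near such points established in Section~\ref{sec:equipartitions}.
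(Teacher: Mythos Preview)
Your proposal is correct and follows essentially the same strategy as the paper: the spectral correspondence you describe is exactly Lemma~\ref{lem:eigeig}, and your counting argument via $N(0,\mathfrak l_k+\epsilon)-N(+\infty,\mathfrak l_k+\epsilon)$ is the monotone spectral-flow computation carried out in the proof of Theorem~\ref{thm:main}. The obstacle you anticipate (multiplicities and gluing near the odd points) is handled in the paper precisely as you suggest, by the local gauges $\Theta_\Xb^i$ reducing matters to the non-magnetic case; the paper does not dwell on the regularity at the odd critical points beyond this reduction.
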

\addtocounter{theorem}{-1}
\endgroup

The key ingredient in the proof is the connection between the negative eigenvalues of $\DN_{\mathcal D,\Ab^\Xb}(\mathfrak l_k+\epsilon)$ and eigenvalues of the family $L_\sigma$, given in the following lemma. 

\begin{lemma}\label{lem:eigeig}
Assume that $\sigma>0$ and that $\epsilon>0$ is sufficiently small. Then $-\sigma$ is an eigenvalue of $\DN_{\mathcal D,\Ab^\Xb}(\mathfrak l_k+\epsilon)$ if, and only if, $\mathfrak l_k+\epsilon$ is an eigenvalue of $L_\sigma$. If this is the case, then the multiplicities agree.
\end{lemma}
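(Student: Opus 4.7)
The plan is to set up an explicit linear bijection between the $(-\sigma)$-eigenspace of $\DN_{\mathcal D,\Ab^\Xb}(\mathfrak l_k+\epsilon)$ and the $(\mathfrak l_k+\epsilon)$-eigenspace of $T_{\Ab^\Xb,\sigma}$. The threshold on $\epsilon$ is fixed by requiring that $\mathfrak l_k+\epsilon$ lies strictly below the second Dirichlet eigenvalue of $-(\nabla_{\Ab^\Xb})^2$ on every subdomain $D_i$ (and above $\mathfrak l_k$). This is possible because the gauge transformation by $\Theta_\Xb^i$ identifies the magnetic Dirichlet spectrum on $D_i$ with the ordinary one, whose first eigenvalue is $\mathfrak l_k$ by the equipartition assumption and whose second is strictly larger. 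With this choice, each local problem~\eqref{mdn2} is uniquely solvable, so every $\DN_{i,\Ab^\Xb}(\mathfrak l_k+\epsilon)$ is well defined.

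For the forward direction, given $h \in H^{1/2}_{\Ab^\Xb}(\Gamma)$ with $\DN_{\mathcal D,\Ab^\Xb}(\mathfrak l_k+\epsilon)h = -\sigma h$, I would let $u_i$ be the unique solution of~\eqref{mdn2} in $D_i$ with boundary data $R_{\Gamma\shortto\partial D_i}h$ (and zero on $\partial\Omega\cap\partial D_i$), and glue the $u_i$ together into $u \in H^1_{0,\Ab^\Xb}(\Omega)$. On each regular arc $\gamma \subset \partial D_i \cap \partial D_j$, testing the defining identity~\eqref{eq:4.5} against functions in $H^{1/2}_{\Ab^\Xb}(\Gamma)$ supported near $\gamma$ localizes the sum to the two contributions from $D_i$ and $D_j$ and unwinds the eigenvalue equation into
\[
\nu_i \cdot \nabla_{\Ab^\Xb} u_i + \nu_j \cdot \nabla_{\Ab^\Xb} u_j = -\sigma u \quad \text{on } \gamma,
\]
which is exactly~\eqref{eq:transmission}. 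Proposition~\ref{prop:transmission} then places $u$ in $\mathrm{dom}(T_{\Ab^\Xb,\sigma})$, and since $-(\nabla_{\Ab^\Xb})^2 u = (\mathfrak l_k+\epsilon) u$ on each $D_i$, $u$ is an eigenfunction of $T_{\Ab^\Xb,\sigma}$ at $\mathfrak l_k+\epsilon$. Because $u|_\Gamma = h$, the map $h \mapsto u$ is injective.

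Conversely, given an eigenfunction $u$ of $T_{\Ab^\Xb,\sigma}$ at $\mathfrak l_k+\epsilon$, I would set $h := u|_\Gamma$. Unique solvability of the magnetic Dirichlet problem on each $D_i$ recovers $u|_{D_i}$ from $R_{\Gamma\shortto\partial D_i}h$, and the transmission condition built into $\mathrm{dom}(T_{\Ab^\Xb,\sigma})$ rearranges, via~\eqref{eq:4.5}, to $\DN_{\mathcal D,\Ab^\Xb}(\mathfrak l_k+\epsilon)h = -\sigma h$. One checks that $h \not\equiv 0$ whenever $u \not\equiv 0$: otherwise each $u|_{D_i}$ would be a Dirichlet eigenfunction at energy $\mathfrak l_k+\epsilon$, contradicting the choice of $\epsilon$. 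These two constructions are mutually inverse linear maps between the two eigenspaces, so their dimensions agree. The main obstacle is the careful bookkeeping when translating the global identity on $\Gamma$ back to the local transmission relation on each arc, especially because the magnetic normal derivatives in~\eqref{eq:mbd} are defined through the square roots $\Theta_\Xb^i$ and $\Theta_\Xb^j$, which on the shared arc agree only up to a sign and must be handled consistently when the two contributions are added; apart from this, the argument is parallel to the non-magnetic case in~\cite{BeCoMa}.
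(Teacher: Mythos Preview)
Your proposal is correct and follows essentially the same approach as the paper: both directions are proved by constructing mutually inverse linear maps $h\mapsto u$ (solving the magnetic Dirichlet problem on each $D_i$ and gluing) and $u\mapsto u|_\Gamma$, with the transmission condition~\eqref{eq:transmission} identified with the eigenvalue equation for $\DN_{\mathcal D,\Ab^\Xb}(\mathfrak l_k+\epsilon)$, and injectivity of the trace map deduced from the fact that $\mathfrak l_k+\epsilon$ is not a Dirichlet eigenvalue on any $D_i$ for small $\epsilon$. Your remark on the sign compatibility of the square roots $\Theta_\Xb^i$ and $\Theta_\Xb^j$ on a shared arc is a welcome point of care that the paper leaves implicit.
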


We refer the reader to~\cite[Theorem~4.1]{AM0} and to~\cite[Lemma~1]{BeCoMa} for proofs of similar statements, but give the details below for completeness.

\begin{proof}
Assume first that $-\sigma$ is a negative eigenvalue of $\DN_{\mathcal D,\Ab^\Xb}(\mathfrak l_k+\epsilon)$ with eigenfunction $h\in H^{1/2}_{\Ab^\Xb}(\Gamma)$. Let $D_i$ and $D_j$ be two neighbors in the partition $\mathcal D$. Then, on their common boundary $\partial D_i\cap \partial D_j$ in $\Omega$,
\[ 
  \bigl(\DN_{i,\Ab^\Xb}(\mathfrak l_k+\epsilon) R_{\Gamma\shortto\partial D_i}+
  \DN_{j,\Ab^\Xb}(\mathfrak l_k+\epsilon) R_{\Gamma\shortto\partial D_j}\bigr)h=-\sigma h.
\]
We denote by $u_i\in H^1_{\Ab^\Xb}(D_i)$ a function that satisfies
\begin{equation}\label{eq:eigLsigma}
\begin{cases}
  -(\nabla_{\Ab^\Xb})^2u_i=(\mathfrak l_k+\epsilon)u_i,&\text{in $D_i$},\\
  u_i=R_{\Gamma\shortto\partial D_i}h,&\text{on $\partial D_i$},
\end{cases}
\end{equation}
and we define $u_j$ similarily, with $i$ replaced by $j$. As in~\eqref{eq:mbd} we interpret the magnetic Dirichlet-to-Neumann operator as magnetic boundary derivatives,
\begin{equation}\label{eq:DNi}
  R_{\partial D_i\shortto\Gamma}\DN_{i,\Ab^\Xb}(\mathfrak l_k+\epsilon) {R_{\Gamma\shortto\partial D_i}h}  =
  \nu_i\cdot \nabla_{\Ab^\Xb}u_i,
\end{equation}
and
\begin{equation}\label{eq:DNj}
  R_{\partial D_j\shortto\Gamma}\DN_{j,\Ab^\Xb}(\mathfrak l_k+\epsilon) {R_{\Gamma\shortto\partial D_j}h}
  =
  \nu_j\cdot \nabla_{\Ab^\Xb}u_j.
\end{equation}
Thus, we find that on the common boundary,
\begin{equation}\label{eq:commonboundary}
  \nu_i\cdot \nabla_{\Ab^\Xb}\,u_i+\nu_j\cdot \nabla_{\Ab^\Xb}\,u_j=-\sigma h.
\end{equation}
Taking into account that we use an outward pointing normal, this is nothing but the transmission conditions from~\eqref{eq:transmission} in Proposition~\ref{prop:transmission}. This means that $u$, defined by $u_i$ in each $D_i$ and as $h$ on $\Gamma$ and as $0$ on $\partial\Omega$ actually belongs to the domain of $T_{\Ab^\Xb,\sigma}$ and from~\eqref{eq:eigLsigma} we also find that $u$ is an eigenfunction of $T_{\Ab^\Xb,\sigma}$ with eigenvalue $\mathfrak l_k+\epsilon$.

Next, assume that $u$ is an eigenfunction of $T_{\Ab^\Xb,\sigma}$ corresponding to the eigenvalue $\mathfrak l_k+\epsilon$. We recall that the trace of $u$ at $\partial\Omega$ equals $0$. Let $h$ denote the restriction of $u$ to $\Gamma$. Then $u$ satisfies the transmission conditions in~\eqref{eq:transmission}. This means that, if we denote by $u_i$ and $u_j$ the restrictions of $u$ to two neighbors $D_i$ and $D_j$, then~\eqref{eq:commonboundary} is satisfied on their common boundary $\partial D_i\cap\partial D_j$. Now, these magnetic Neumann derivatives agree with the magnetic Dirichlet-to-Neumann operators we defined, so~\eqref{eq:DNi} and~\eqref{eq:DNj} hold. But this means precisely that $h$ is an eigenfunction of $\DN_{\mathcal D,\Ab^\Xb}(\mathfrak l_k+\epsilon) h$ with eigenvalue $-\sigma$.

To see that the multiplicites of the eigenspaces agree, one notices that the mapping from $\ker(T_{\Ab^\Xb,\sigma}-(\mathfrak l_k+\epsilon))$ to $\ker(\DN_{\mathcal D,\Ab^\Xb}(\mathfrak l_k+\epsilon)+\sigma)$ that sends $u$ to the trace of $u$ at $\Gamma$ is an isomorphism. We have seen above that the mapping is well-defined. Moreover, it is injective, since, if $u\in\ker(T_{\Ab^\Xb,\sigma}-(\mathfrak l_k+\epsilon))$ and the trace of $u$ at $\Gamma$ equals zero, then $u$ belongs to $\ker(T_{\Ab^\Xb,+\infty}-(\mathfrak l_k+\epsilon))$. We know, however, that $\mathfrak l_k$ is the smallest eigenvalue of $T_{\Ab^\Xb,+\infty}$, and so with $\epsilon>0$ sufficiently small, we have that $\mathfrak l_k+\epsilon$ cannot be an eigenvalue of $T_{\Ab^\Xb,+\infty}$. Hence $u=0$.
\end{proof}

We are now ready to give the proof of Theorem~\ref{thm:main}.

\begin{proof}[Proof of Theorem~\ref{thm:main}]
The proof is similar to the proof of~\cite[equation~(3)]{BeCoMa}. The first eigenvalue $\lambda_1(+\infty)$ of $T_{\Ab^\Xb,+\infty}$ equals the first Dirichlet eigenvalue of the Laplace operator on each component of $\mathcal D$, and hence it has multiplicity $k(\mathcal D)$. Moreover, since $\lambda_1(+\infty)=\mathfrak l_k$,
\[
\lim_{\sigma\to+\infty}\lambda_n(\sigma)\,\,
\begin{cases}
=\mathfrak l_k, & 1\leq n\leq k,\\
>\mathfrak l_k, & n>k.
\end{cases}
\]
The operator $T_{\Ab^\Xb,0}=T_{\Ab^\Xb}$, on the other hand, has $\ell(\mathcal D)+\dim \ker (T_{\Ab^\Xb} -\mathfrak l_k)-1$ eigenvalues less than or equal to $\mathfrak l_k$, and exactly $k(\mathcal D)$ of them will converge to $\mathfrak l_k$ as $\sigma\to+\infty$. This means that $\ell(\mathcal D)+\dim \ker (T_{\Ab^\Xb} -\mathfrak l_k)-1-k(\mathcal D)$ eigenvalues of $T_{\Ab^\Xb,\sigma}$ will cross $\mathfrak l_k+\epsilon$ for some finite $\sigma>0$, if $\epsilon>0$ is sufficiently small.

According to Lemma~\ref{lem:eigeig}, and observing the monotonicity of $\sigma\mapsto \lambda_n(\sigma)$,  every such crossing gives rise to a negative eigenvalue of $\DN_{\mathcal D,\Ab^\Xb}(\mathfrak l_k+\epsilon)$, including counting multiplicity. 
\end{proof}

\begin{remark}
It would be interesting to understand, like in the bipartite situation, the link between the zero deficiency property
\[
  1 - \dim \ker (T_{\Ab^\Xb} -\mathfrak l_k)+ \Mor\bigl(\DN_{\mathcal D,\Ab^\Xb}(\mathfrak l_k+\epsilon)\bigr),
\]
and the minimal partition property.
   
It is mentioned in~\cite[Remark~5.2]{HH:2015b} that if we have a minimal $k$-partition then we are in the Courant sharp situation  for the corresponding AB Hamiltonian $T_{\Ab^\Xb}$, i.e. it has the zero deficiency property.

The converse is true as recalled above for a bipartite partition but wrong in general. A counterexample is given for the square and $k=5$ in~\cite[Fig.~19]{BH}, which is kindly reproduced in Figure~\ref{fig:5pcc}. We have on the left a $5$-partition with one critical odd point which is the nodal partition of the $5$-th eigenfunction  of its associated AB operator, but  is not minimal. We have on the right a $5$-partition with four  critical odd points  which is the nodal partition of the  $5$-th eigenfunction of its associated AB operator, which is not minimal. It is conjectured that  a  minimal 5-partition is indeed obtained for the middle  configuration  with also  four odd  critical points.

\begin{figure}[htbp]
\centering
\includegraphics[height=3.5cm]{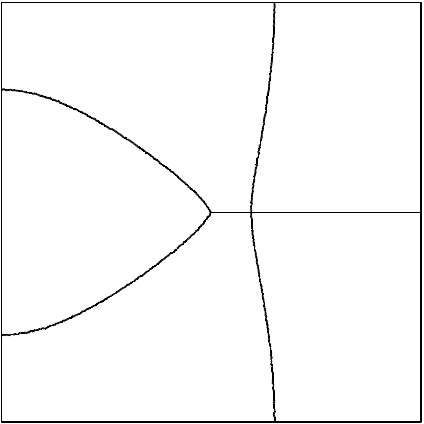} 
\hskip0.5cm
\includegraphics[height=3.5cm]{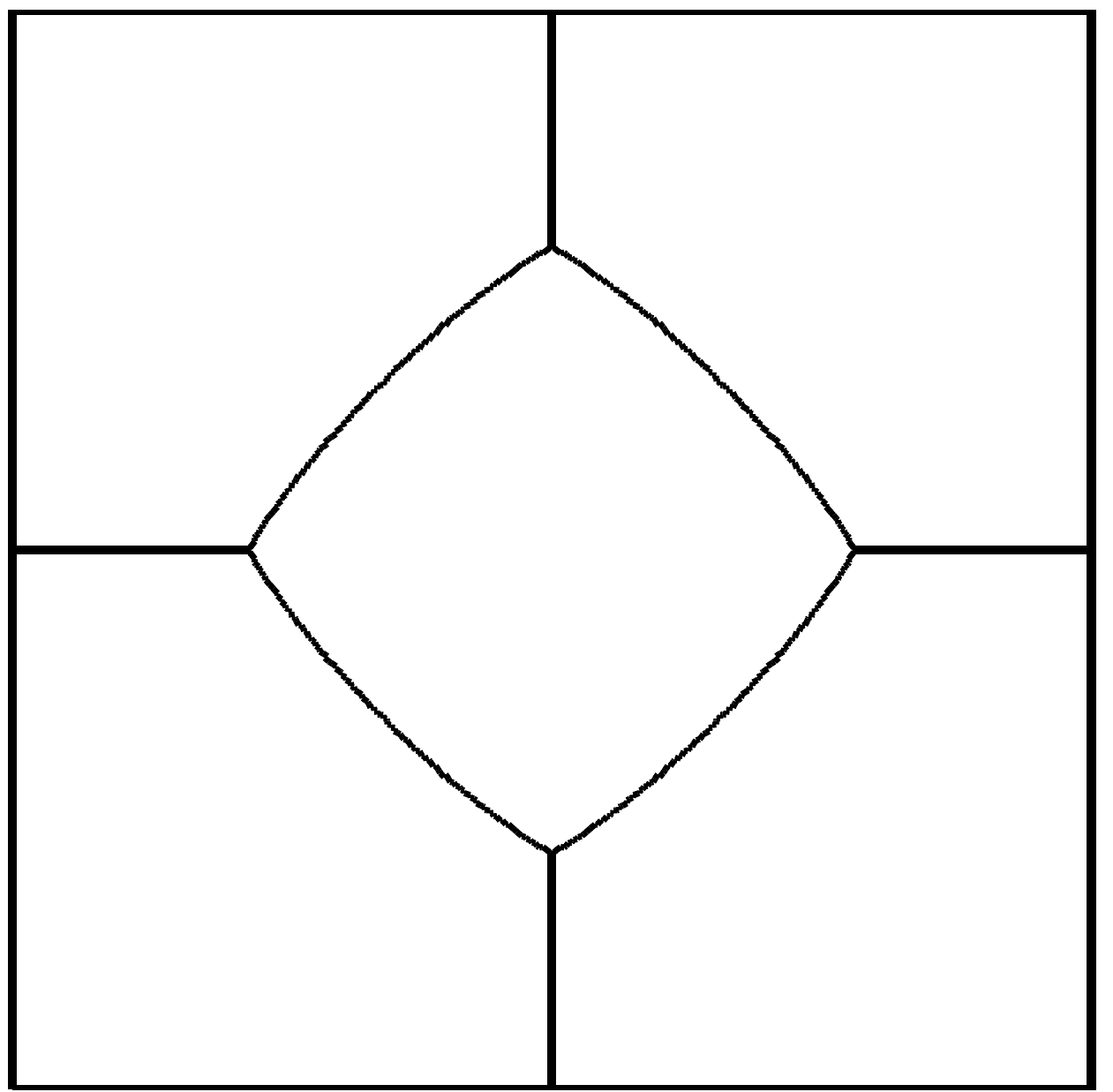} 
\hskip0.5cm
\includegraphics[height=3.5cm]{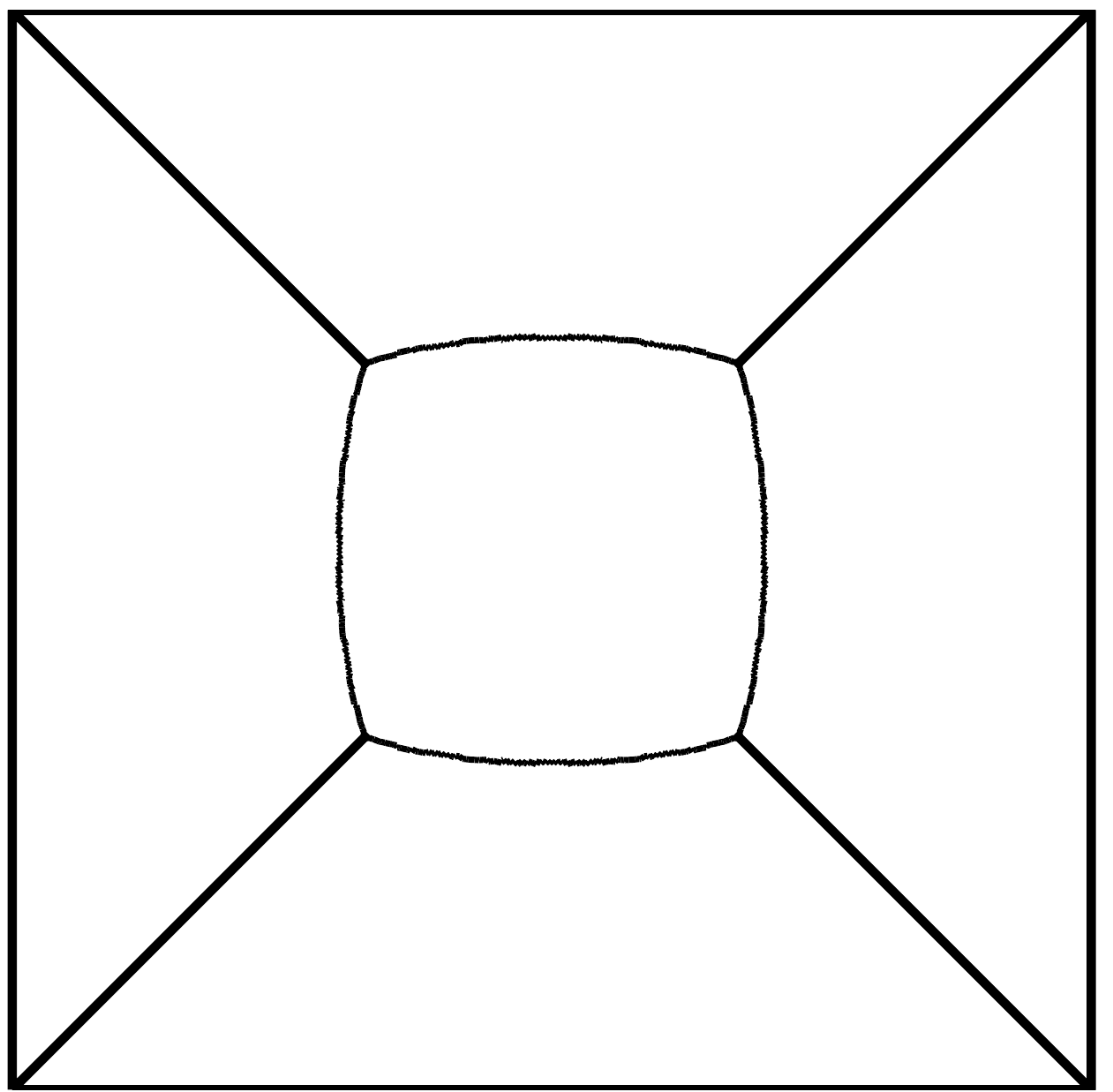} 
\caption{Three $5$-equipartitions satisfying the PCC, with $0$ deficiency index. 
The middle one has minimal energy among these three.}
\label{fig:5pcc}
\end{figure}
\end{remark}

\subsection{The Berkolaiko--Cox--Marzuola construction through double covering lifting}
In many of the papers analyzing minimal partitions, the authors refer to a double covering argument. This point of view (which appeared first in~\cite{HHOO} in the case of domains with holes) is essentially equivalent to the Aharonov approach. We just mention the main lines of the argument. One can, in an abstract way, construct a double covering manifold $\widetilde \Omega:=\dot \Omega^{\bf X}_{\mathcal R}$ above $\dot \Omega^{\bf X}$. One can then lift the initial spectral problem to one for the Laplace operator on this new (singular) (see \cite{AFT} and \cite{HHOT}) $\widetilde \Omega $. In this lifting, the $K^{{\Xb}}$-real eigenfunctions become eigenfunctions which are real and antisymmetric with respect to the deck map (exchanging two points having the same  projection on $\dot \Omega^{\bf X}$).

In the case of the disk,  with one AB half normalized flux in the center, the construction is equivalent to considering the angular variable $\theta \in (0,4\pi)$, and the deck map corresponds to the translation by $2\pi$. The nodal set of the $6$-th eigenfunction gives by projection the Mercedes star and the $11$-th eigenvalue (which is the $5$-th in the space of antiperiodic functions) gives by projection the candidate for a minimal three-partition.

Starting from an eigenfunction $u$ of $T_{\Ab^\Xb}$ with zero set $\Gamma$, the idea is now to apply the construction of Berkolaiko--Cox--Marzuola to the Laplacian on $\widetilde \Omega $ and to the lifted eigenfunction $\tilde u$, having in mind that this is an antisymmetric eigenfunction. The zero-set of $\tilde u$ is $\widetilde\Gamma=\Pi^{-1} (\Gamma)$.  One should then interpret the quantities for the covering $\widetilde \Omega$ in term of  quantities on $\Omega$.

Hence we should define the Dirichlet to Neumann attached to the Laplacian on  $\widetilde \Omega $ and $\widetilde \Gamma$ and reinterpret it when restricted to antisymmetric functions on $\widetilde \Gamma$. The spectrum of $T_{\Ab^\Xb}$ consists of the eigenvalues corresponding to the antisymmetric eigenfunctions of $-\Delta$. Hence the labelling of the eigenvalue of $T_{\Ab^\Xb}$ corresponds to the labelling of $-\Delta $ restricted to the antisymmetric space.

\section{The cutting construction for general regular PCC-equipartitions}\label{sec:cutting}
We describe below the approach to make cuts in the domain $\Omega$ along some of the subpaths in $\Gamma$. In this way, we avoid to discuss the artificial singularities introduced with Aharonov--Bohm operators. We start by explaining how this procedure works in a simple case.

\subsection{Example: the Mercedes star.}\label{sMS}
We first consider the case when we have in $\Omega$ a $3$-partition, with only one critical point (which has the topology of the Mercedes star). We can assume that $\Omega$ is the disk, and that we have the critical point at $0\in \Omega$. We further denote by  $D_1$, $D_2$ and $D_3$ the components of the $3$-partition, and by $\Gamma_1$, $\Gamma_2$ and $\Gamma_3$ the three branches of the star, as in Figure~\ref{fig:Mercedes}. Starting from the \enquote{formal} Aharonov--Bohm point of view, we try to eliminate the reference to this operator by using a suitable square root $\exp(\i \phi_{X}/2)$.

\begin{figure}[htb]
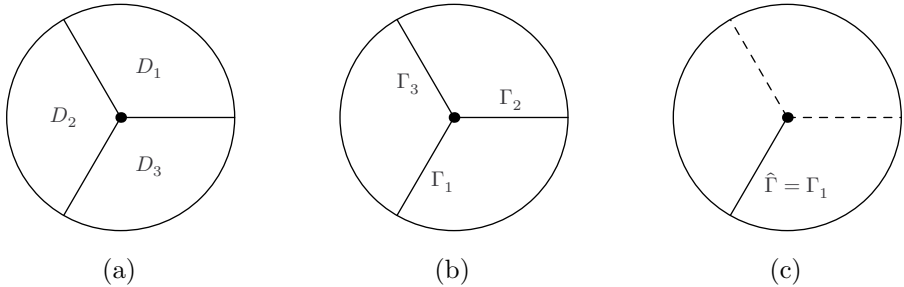

\centering
\subcaptionbox{\label{Mercedes-a}}{\includegraphics[page=3]{figures.pdf}}
\hskip 1cm
\subcaptionbox{\label{Mercedes-b}}{\includegraphics[page=4]{figures.pdf}}
\hskip 1cm
\subcaptionbox{\label{Mercedes-c}}{\includegraphics[page=25]{figures.pdf}}
\caption{The unit disk with \subref{Mercedes-a} a simple $3$-equipartition 
\subref{Mercedes-b} the boundary of this $3$-equipartition. In \subref{Mercedes-c} the slitted domain is shown.}
\label{fig:Mercedes}
\end{figure}

The idea is to look at our Robin form as a sesquilinear form on a functional  space defined on $\Omega \setminus \Gamma_1$. We note that  $\exp(\i \phi_{X}/2)$ can be well defined as a univalued function on $\Omega \setminus \Gamma_1$. If $u\in H_{0,\Ab}^1(\Omega)$, $\hat u:= \exp(-\i \phi_{X}/2)u$ belongs to $H^1(\Omega\setminus \Gamma_1)$ with boundary condition $\hat u|_{\partial \Omega}=0$ and $\hat u|_{\Gamma_1^-} = -\hat u|_{\Gamma_1^+}$. Here the $-$ and $+$ indicate that we have introduced an orientation on $\Gamma_1$, and that we we approach the curve $\Gamma_1$ from different sides, say left for $-$ and $+$ for right). We denote this space as $\widehat H_{0}^1(\Omega \setminus \Gamma_1)$.

On this new Sobolev space $\widehat H_{0}^1(\Omega \setminus \Gamma_1)$, we get the sesquilinear form
\[
\begin{aligned}
 (\hat u,\hat v)\mapsto \hat {\mathfrak t}_\sigma (\hat u, \hat v) 
 & = \sum_{j=1}^3 \int_{D_j} \nabla  \hat u  \cdot \overline{\nabla \hat  v} \,dx 
 + \sigma \int_\Gamma \hat u\overline{\hat v}\,ds_\Gamma\\ 
 & = \int_{\Omega \setminus \Gamma_1} \nabla  \hat u  \cdot \overline{\nabla \hat  v}\, dx\,  
 + \sigma \int_{\Gamma} \hat u\, \overline{\hat v}\,ds_\Gamma\,.
 \end{aligned}
\]
Here we note that on $\Gamma_1$ the left trace of $ \hat u\, \overline{\hat v}$ equals the right trace of $ \hat u\, \overline{\hat v}$. We need to describe the transmission obtained on $\Gamma$, the operator being the standard Laplacian. For this purpose, we write $\hat u|_{D_j}=\hat u_j$ ($j=1,2,3$) and redescribe $\widehat H_{0}^1(\Omega \setminus \Gamma_1)$, and we need to give the new transmission relations through $\Gamma_1$, $\Gamma_2$ and $\Gamma_3$.

Introducing (in an arbitrary way) orientations on the curves, we can talk about left traces and right traces, and we denote the corresponding outward pointing normal derivatives by $\partial_{\nu^k_{\pm}}$. The transmission conditions are unchanged on $\Gamma_2$ and $\Gamma_3$, where we did not make the cut. This means that
\[
\partial_{\nu_-^2} \hat u_1  + \partial_{\nu_+^2}  \hat u_3 
=  -\sigma  \hat u_{3},\quad \hat u_3=\hat u_1,\quad \text{on $\Gamma_2$,}
\] 
and
\[
\partial_{\nu^1_-} \hat u_2  + \partial_{\nu^1_+}  \hat u_1 
=  -\sigma  \hat u_{2},\quad \hat u_2= \hat u_1,\quad \text{on $\Gamma_3$.}
\]
On the path where we did the cut, however, we obtain a Robin-like boundary condition
\[
-\partial_{\nu_-^3} \hat u_2  + \partial_{\nu_+^3}  \hat u_3 
=  -\sigma  \hat u_{3},\quad \hat u_3= - \hat u_2,\quad \text{on $\Gamma_1$.}
\]
Thus, associated with the quadratic form $\hat{\mathfrak t}_\sigma$ we associate a self-adjoint Laplace operator $\widehat T_{\Gamma, \widehat \Gamma,\sigma}$, where we for future reference has defined $\widehat \Gamma$ as the part of $\Gamma$ that is removed from $\Omega$, in our case
\[
\widehat \Gamma= \Gamma_1.
\]

For $\sigma =0$, we get the Laplace operator $\laplace_{\Omega \setminus \widehat\Gamma}$ playing the role of the Dirichlet realization in~\cite{BeCoMa} and the AB Hamiltonian $T_{\Ab^\Xb}$ in the magnetic Laplacian case that we discussed in the previous section. For $\sigma =+\infty$, we recover the Dirichlet Laplacian on the disjoint union of the $D_i$'s.

We next define the Dirichlet-to-Neumann type operator that replaces $\Lambda_-(\epsilon) +\Lambda_{+}(\epsilon)$ in~\cite{BeCoMa}. It is again an operator from $H^{1/2}(\Gamma)$ to $H^{-1/2} (\Gamma)$, and we will this time denote it by $\DN_{\Gamma,\widehat{\Gamma}}(\lambda)$.

Let $h$ belong to $H^{1/2}(\Gamma)$. We modify the $R_{\Gamma\shortto\partial D_i}$ operators to take into account our new conditions at $\widehat\Gamma=\Gamma_1$. To do this, we first recall that we have introduced orientations on each $\Gamma_i$. With our choice above, this means that we have chosen to flip the sign on $\Gamma_1$ when we are on the $D_2$ side of it. Thus, we set for $i=1$ and $i=3$
\[
  \widehat{R}_{\Gamma\shortto \partial D_i}
  = R_{\Gamma\shortto \partial D_i},
\]
while we for $i=2$ instead set
\[
  \widehat{R}_{\Gamma\shortto \partial D_2}h
  =
  \begin{cases}
    h & \text{on $\Gamma_3$},\\
    -h & \text{on $\Gamma_1$},\\
    0 & \text{on $\partial\Omega\cap\partial D_2$}.
  \end{cases}  
\]
Then, with the classical Dirichlet-to-Neumann operator in $D_i$ denoted by $\DN_{\partial D_i}$, we set
\begin{equation}\label{eq:DNslitting}
  \DN_{\Gamma,\widehat\Gamma}(\lambda)h=\sum_{i=1}^3 \widehat{R}_{\partial D_i\shortto\Gamma} \DN_{\partial D_i} \widehat{R}_{\Gamma\shortto \partial D_i}h.
\end{equation}
The operator $\DN_{\Gamma,\widehat\Gamma}(\lambda)$ is self-adjoint, the proof being very similar to the proof of Proposition~\ref{prop:DNsa}.

In this formalism, we denote by $\Mor(\DN_{\Gamma,\widehat\Gamma}(\lambda))$ the number of negative eigenvalues of $\DN_{\Gamma,\widehat\Gamma}(\lambda)$. We introduce the defect $\widehat{\Def}(\mathcal D,\widehat\Gamma)$ of the partition $\mathcal D$ as
\[
\widehat{\Def}(\mathcal D):=\hat \ell(\mathcal D,\widehat\Gamma)-k(\mathcal D),
\]
where $\hat\ell(\mathcal D)$ denotes the minimal labelling of the eigenvalue $\hat{\mathfrak l}_k$ of the Hamiltonian $T_{\Gamma,\widehat\Gamma,0}=\laplace_{\Omega\setminus\widehat\Gamma}$. We can reformulate Theorem~\ref{thm:main} in the following way:

\begin{theorem}\label{thm:flowa}
Let $\mathcal D$ a regular $k$-equipartition of  a simply connected domain $\Omega$ satisfying the PCC with energy $\mathfrak l_k=\mathfrak l_k(\Omega)$. Then, for sufficiently small $\epsilon >0$, 
\begin{equation}\label{eq:5.1}
\widehat {\Def} (\mathcal D)
= 1 - \dim \ker ( \laplace_{\Omega\setminus \widehat \Gamma} -(\mathfrak l_k+\epsilon))
+  \Mor(\DN_{\Gamma,\widehat\Gamma}(\mathfrak l_k+\epsilon)).
\end{equation}
\end{theorem}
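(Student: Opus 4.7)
My plan is to deduce Theorem~\ref{thm:flowa} from Theorem~\ref{thm:main} by means of a gauge transformation that identifies the Aharonov--Bohm framework with the slitted one. The Mercedes star example in Subsection~\ref{sMS} is exactly the formal shadow of this: the substitution $\hat u = \exp(-\i \Phi_{\Xb}/2)\,u$ is uni-valued precisely on $\Omega\setminus\widehat\Gamma$ and flips sign across $\widehat\Gamma$. So the whole spectral picture of $T_{\Ab^\Xb,\sigma}$ transports to $\widehat T_{\Gamma,\widehat\Gamma,\sigma}$, and the formula of Theorem~\ref{thm:main} translates term by term.

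First I would fix $\widehat\Gamma$ precisely. Choose a minimal system of arcs inside $\Gamma$ that connects every odd critical point of $\mathcal N(\mathcal D)$ to the outer boundary $\partial\Omega$ (and passes through one point of every odd hole, if present), so that $\Omega\setminus\widehat\Gamma$ is simply connected and contains no pole of $\Ab^\Xb$ in its interior relative to its cut boundary. Since each loop encircling an odd pole contributes a factor $-1$ to $\exp(\i \Phi_\Xb/2)$, the latter admits a single-valued smooth branch $\Theta$ on $\Omega\setminus\widehat\Gamma$, and $\Theta$ changes sign when one crosses any arc of $\widehat\Gamma$. On each $D_i$ this branch coincides, up to a unimodular constant, with the local square root $\Theta_\Xb^i$ used in Subsection~\ref{sec:magneticDN}.

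Second, I would verify the structural properties of the unitary $U\colon L^2(\Omega)\to L^2(\Omega\setminus\widehat\Gamma)$, $Uu=\overline\Theta\,u$. Using $\nabla_{\Ab^\Xb}=\Theta\,\nabla\,\overline\Theta$ pointwise on $\Omega\setminus\widehat\Gamma$, one checks that $U$ sends $H^1_{0,\Ab^\Xb}(\Omega)$ onto the anti-periodic space $\widehat H^1_0(\Omega\setminus\widehat\Gamma)$ (the sign change of $\Theta$ across $\widehat\Gamma$ forces the condition $\hat u|_{\widehat\Gamma^-}=-\hat u|_{\widehat\Gamma^+}$), preserves pointwise norms of the magnetic gradient, and preserves boundary values on $\Gamma$ up to the same sign convention, so it intertwines the forms $\mathfrak t_\sigma$ and $\hat{\mathfrak t}_\sigma$ for every $\sigma\in[0,+\infty]$. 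Consequently $U$ conjugates $T_{\Ab^\Xb,\sigma}$ with $\widehat T_{\Gamma,\widehat\Gamma,\sigma}$; in particular $\hat{\mathfrak l}_k=\mathfrak l_k$, $\hat\ell(\mathcal D)=\ell(\mathcal D)$, and $\widehat{\Def}(\mathcal D)=\Def(\mathcal D)$, and $\dim\ker(\laplace_{\Omega\setminus\widehat\Gamma}-\mathfrak l_k)=\dim\ker(T_{\Ab^\Xb}-\mathfrak l_k)$. The construction~\eqref{eq:DNslitting} of $\DN_{\Gamma,\widehat\Gamma}(\lambda)$ is exactly the push-forward of~\eqref{eq:4.5} through $U$: the sign flip built into $\widehat R_{\Gamma\shortto\partial D_i}$ on the arcs of $\widehat\Gamma$ adjacent to $D_i$ is precisely the trace of $\overline\Theta$ changing sign there, so $U\DN_{\mathcal D,\Ab^\Xb}(\lambda)U^{-1}=\DN_{\Gamma,\widehat\Gamma}(\lambda)$ and the two operators share Morse index. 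Plugging these four identifications into Theorem~\ref{thm:main} yields \eqref{eq:5.1}, with the understanding that the kernel term is taken at $\mathfrak l_k$ (for small $\epsilon>0$ the kernel at $\mathfrak l_k+\epsilon$ is trivial anyway, so both readings coincide).

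The main obstacle is the careful bookkeeping at the singular set $\Xb\cup \partial\Omega^{\mathrm{odd}}(\mathcal D)$: the gauge $\Theta$ blows up like $r^{-1/2}$ at each odd pole, while elements of $H^1_{0,\Ab^\Xb}(\Omega)$ vanish there like $r^{1/2}$ (see~\cite{Le}), so the pointwise identities above must be promoted to identities in the appropriate function spaces, and one must check that the induced trace map on $\widehat\Gamma$ is well-behaved at its endpoints. Alternatively, should one prefer a self-contained argument, the proof of Theorem~\ref{thm:main} can be replayed \emph{verbatim}: prove the analogue of Proposition~\ref{prop:transmission} for $\widehat T_{\Gamma,\widehat\Gamma,\sigma}$ (standard Laplacian in the slitted domain with the same Robin transmission on $\Gamma\setminus\widehat\Gamma$ and an anti-periodic Robin condition on $\widehat\Gamma$); then the analogue of Lemma~\ref{lem:eigeig}, namely that $-\sigma$ is an eigenvalue of $\DN_{\Gamma,\widehat\Gamma}(\mathfrak l_k+\epsilon)$ iff $\mathfrak l_k+\epsilon$ is an eigenvalue of $\widehat T_{\Gamma,\widehat\Gamma,\sigma}$ with equal multiplicity; and finally count, via the monotonicity of the analytic branches $\sigma\mapsto \hat\lambda_n(\sigma)$, the number $\hat\ell(\mathcal D)+\dim\ker(\laplace_{\Omega\setminus\widehat\Gamma}-\mathfrak l_k)-1-k(\mathcal D)$ of eigenvalues crossing $\mathfrak l_k+\epsilon$.
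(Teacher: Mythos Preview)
Your proposal is correct and closely aligned with the paper. The paper uses exactly the gauge substitution $\hat u=\exp(-\i\Phi_\Xb/2)u$ on $\Omega\setminus\widehat\Gamma$ (introduced in the Mercedes star subsection) to \emph{define} the slitted objects $\widehat H^1_0(\Omega\setminus\widehat\Gamma)$, $\hat{\mathfrak t}_\sigma$, $\widehat T_{\Gamma,\widehat\Gamma,\sigma}$ and $\DN_{\Gamma,\widehat\Gamma}$, and then states that the proof ``follows the lines of the proof of Theorem~\ref{thm:main}'' (i.e.\ your alternative ``replay'' route). You instead promote the gauge map to a unitary conjugation and deduce~\eqref{eq:5.1} directly from Theorem~\ref{thm:main}; this is the same mechanism, only made more explicit, and arguably cleaner since it avoids redoing Lemma~\ref{lem:eigeig} and the monotonicity count.

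Two minor points. First, your description of $\widehat\Gamma$ (``connects every odd critical point to the outer boundary'') is one admissible choice but not the general one: the paper allows branch cuts joining two odd points to each other as well, and establishes existence via the graph-theoretic cycle-removal argument (Proposition~\ref{prop:slitting}); what matters is only that $\Omega\setminus\widehat\Gamma$ is connected and that $\exp(\i\Phi_\Xb/2)$ becomes uni-valued there. Second, your honest flag about the $r^{\pm 1/2}$ behaviour at the poles is exactly the place where the paper is silent; the paper sidesteps it implicitly because, on each $D_i$, the identification with the non-magnetic DN operator via $\Theta_\Xb^i$ (Subsection~\ref{sec:magneticDN}) already absorbs the singularity, and your global $\Theta$ restricts to those local $\Theta_\Xb^i$ up to a sign.
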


The proof of this theorem in the case of the Mercedes star, where we have done our construction of the operators, follows the lines of the proof of Theorem~\ref{thm:main}. Note that due to the symmetries one can have in this case a nicer more explicit expression for $\DN_{\Gamma,\widehat\Gamma}(\mathfrak l_k+\epsilon)$ (see the analysis on the circle from Appendix~\ref{ex:circle}). The question is now how to choose $\widehat\Gamma$ in the general situation and then also how to define $\DN_{\Gamma,\widehat\Gamma}(\lambda)$.

\subsection{The general case}
Our aim is now to generalize the construction we have done for the Mercedes star.

\subsubsection{The choice of $\widehat \Gamma$}
This is indeed quite analogous to what is done when we want to define the square  root of $z\mapsto (z-z_1)(z-z_2)\ldots (z-z_\ell)$ in a maximal domain of $\mathbb C$. By defining branch cuts, one can recover the double covering by gluing the two sheets along these branch cuts. 
 
In our case, we have in a addition a boundary set $\Gamma$ containing the \enquote{odd} points $X_1$, $\ldots$, $X_\ell$ in $\Omega$ and what we have to prove is that $\Gamma$ contains a closed subset $\widehat \Gamma$ corresponding to the branch cuts, which is minimal, in a sense described below. These branch cuts are either connecting inside $\Gamma$ one odd point to one point of the boundary or two odd points.

We should have the property that we can then construct a square root of $\exp (\i\phi)$ denoted by $\exp (\i\phi/2)$ which is univalued on $\Omega \setminus \widehat \Gamma$ and maximal in the sense that it can not be extended to a larger open set. The set $\widehat \Gamma$ will in general not be unique, but all we need is the existence. A natural notion was introduced in~\cite{HHOO} called the slitting property and the only change is that holes are replaced here by points (or) poles.

For a given $\Xb= (X_1,\ldots,X_\ell)$ in $\Omega^\ell$ with distinct $X_i$, we say that a closed set $N$ \emph{slits} $\overline\Omega$ with singularities at $\Xb$ if: 
\begin{itemize}[--]
\item $N$ is a weakly regular closed set in the sense of 
Section~\ref{sec:regularityassumptions};
\item $X^{\mathrm{odd}}(N)=(X_1,\ldots, X_\ell)$;
\item $\Omega \setminus  N$ is connected.
\end{itemize}

This definition was introduced to characterize the properties of the nodal domain of the ground state of $T_{\Ab}$. 

\begin{figure}[htb]
\centering
\subcaptionbox{\label{slitting-a}}{\includegraphics[page=7]{figures.pdf}}
\hskip 0.75cm
\subcaptionbox{\label{slitting-b}}{\includegraphics[page=8]{figures.pdf}}
\hskip 0.75cm
\subcaptionbox{\label{slitting-c}}{\includegraphics[page=9]{figures.pdf}}
\\[1cm]
\subcaptionbox{\label{slitting-d}}{\includegraphics[page=10]{figures.pdf}}
\hskip 0.75cm
\subcaptionbox{\label{slitting-e}}{\includegraphics[page=11]{figures.pdf}}
\hskip 0.75cm
\subcaptionbox{\label{slitting-f}}{\includegraphics[page=12]{figures.pdf}}
\caption{(Topological) slitting examples,
\subref{slitting-a} 
\subref{slitting-b}, 
\subref{slitting-c},
\subref{slitting-d},
\subref{slitting-e},
\subref{slitting-f}.}
\label{fig:slitting}
\end{figure}

Figure~\ref{fig:slitting} is inspired by~\cite[Fig.~1]{HHOO}, and shows some examples of regions which are slitting (but replace the holes in the picture by points in our case). Note that crossing points at even points are permitted (see  Figure~\ref{slitting-d}). Note also that for the $\ell=1$ case (Figure~\ref{slitting-a}), a set which slits $\overline\Omega$ consists of one line which joins the outer boundary of $\Omega$ to the pole (Mercedes situation). We have explained above the no-hole situation. The case with holes is treated in the same way, once we have selected some \enquote{odd} holes, and placed one pole $X'_j$ in each of these odd holes. If a collection of paths slits a region then no sub- or supercollection of these paths can also slit the region.

In this formalism the main result is

\begin{proposition}\label{prop:slitting}
Assume that $\Gamma$ is regular with odd points $X^{\mathrm{odd}}(\Gamma)=\Xb$ in the interior of $\Omega$ and odd points  $\partial \Omega^{\mathrm{odd}}(\Gamma)$ on the boundary of $\Omega$. Then it contains a slitting set $\widehat \Gamma$ that preserves the odd points, i.e. such that $X^{\mathrm{odd}}(\widehat\Gamma)=X^{\mathrm{odd}}(\Gamma)=\Xb$ and  $\partial \Omega^{\mathrm{odd}}(\widehat \Gamma)= \partial \Omega^{\mathrm{odd}}(\Gamma)$.
\end{proposition}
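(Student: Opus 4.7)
The plan is to view $\Gamma$ as a finite planar graph $G$ embedded in $\overline\Omega$ and to produce $\widehat\Gamma$ as a minimum $T$-join in $G$ for a carefully chosen parity target $T$. I would take as vertices of $G$ the interior critical points of $\Gamma$ together with one auxiliary vertex for each connected component of $\partial\Omega$ (the outer boundary and each hole), each boundary endpoint $z_m$ being identified with the vertex of the component it lies on; the edges are the smooth arcs of $\Gamma$. In this model the interior vertices of odd degree are exactly $\Xb$, and the odd hole-vertices are exactly $\partial\Omega^{\mathrm{odd}}(\Gamma)$.

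Next I would set $T_0:=\Xb\cup\partial\Omega^{\mathrm{odd}}(\Gamma)$ and enlarge it to a set $T$ by optionally adding the outer-boundary vertex (whose parity in $\widehat\Gamma$ is not constrained by the conclusion) so that $|T\cap V(C)|$ is even for every connected component $C$ of $G$. This is always possible because, by the handshake lemma, the odd-degree vertex set of $G$ has even cardinality in every component, and the outer vertex is precisely the unconstrained degree of freedom used to correct parity when it is off by one. The classical existence theorem for $T$-joins then yields some $J\subseteq E(G)$; I pick one of minimum cardinality. Minimality forces $J$ to be acyclic — any cycle contributes evenly to every vertex degree and could be removed without violating the parity prescription — so $J$ is a forest whose odd-degree vertex set is exactly $T$, and no two edges of $J$ share both endpoints (that would constitute a $2$-cycle in the underlying multigraph).

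The main obstacle will be to show $\Omega\setminus\widehat\Gamma$ is connected, where $\widehat\Gamma$ is the closed subset of $\overline\Omega$ realized by $J$. The acyclicity of $J$, combined with the simple-multiplicity property between any two collapsed vertices, rules out any closed loop of $\widehat\Gamma\cup\partial\Omega$ enclosing a region of $\Omega$: when an outer or hole vertex $v$ has several arcs of $\widehat\Gamma$ incident to it, each arc terminates at a distinct physical boundary point $z_m$, so the corresponding slits in $\Omega$ are mutually disjoint notches rather than chords. One can formalize this either by an iterative cut-and-paste argument, removing the arcs of $\widehat\Gamma$ one tree at a time and checking at each step that connectedness is preserved, or by a direct Euler-characteristic count $V-E+F=\chi(\overline\Omega)=1-m$ applied to $\widehat\Gamma\cup\partial\Omega$ viewed as a $1$-subcomplex of $\overline\Omega$.

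Finally I would verify that $\widehat\Gamma$ meets the slitting definition: being a subcollection of arcs of the weakly regular $\Gamma$, it is itself weakly regular; the parity properties of the $T$-join give $X^{\mathrm{odd}}(\widehat\Gamma)=\Xb$ and $\partial\Omega^{\mathrm{odd}}(\widehat\Gamma)=\partial\Omega^{\mathrm{odd}}(\Gamma)$; and connectedness of $\Omega\setminus\widehat\Gamma$ was established above. The delicate step throughout is the connectedness verification in the previous paragraph, which depends crucially on both the acyclicity of $J$ and the distinctness of the physical boundary points sharing each collapsed boundary-component vertex.
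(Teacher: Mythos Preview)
Your approach is correct and lands on essentially the same argument as the paper, only phrased in the vocabulary of $T$-joins rather than direct cycle deletion. The paper (crediting Berkolaiko) collapses each component of $\partial\Omega$ to a point, obtaining a graph $\widetilde\Gamma$ on the sphere, and then iteratively deletes cycles: since a cycle meets each of its vertices in exactly two edges, deletion changes every degree by an even number, so all vertex parities are preserved and no odd vertex can disappear. Iterating until no cycle remains yields a forest on $\mathbb S^2$, whose complement is automatically connected; this forest is $\widehat\Gamma$. The case with holes is handled identically, with one collapse-point per boundary component.

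Your minimum $T$-join produces the same kind of object---an acyclic subgraph with the prescribed parities---and the cycle-removal you invoke to show that a minimum $T$-join is acyclic is exactly the paper's inductive step run once. Two small simplifications are available. First, the special handling of the outer-boundary vertex is unnecessary: just take $T$ to be the full set of odd-degree vertices of $G$, which satisfies the handshake parity condition in every component automatically and for which $E(G)$ itself is already a $T$-join. Second, for the connectedness of $\Omega\setminus\widehat\Gamma$, the sphere embedding makes this immediate (a forest on $\mathbb S^2$ has a single complementary face, by Euler's formula), whereas your notch-versus-chord discussion is a bit oblique---what actually prevents separation is that any Jordan curve contained in $\widehat\Gamma\cup\partial\Omega$ would project to a cycle in $G$, which acyclicity forbids.
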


Then, once we have this slitting property, we have 

\begin{proposition}
Under the assumptions on $\Gamma$ from Proposition~\ref{prop:slitting}, there exists $\widehat \Gamma$ such that $X^{\mathrm{odd}}(\Gamma) \subset \widehat \Gamma \subset \Gamma$ and such that there exists in $\Omega\setminus \widehat \Gamma$ a univalued regular square root of $\exp(\i\Phi_\Xb)$ which is maximal in the sense that it cannot be extended as a univalued regular function in an open set in $\Omega$ containing strictly $\Omega\setminus \widehat \Gamma$.
\end{proposition}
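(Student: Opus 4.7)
The plan is to take $\widehat{\Gamma}$ to be the slitting set furnished by Proposition~\ref{prop:slitting}, refined if necessary so that each connected component meets at least one $X_j \in \Xb$. The guiding observation is that $\exp(\i\Phi_\Xb/2)$ is locally well-defined up to sign and that a univalued regular branch on a connected open $U \subset \dot\Omega_\Xb$ exists if and only if the parity homomorphism $m \colon \pi_1(U) \to \{\pm 1\}$, $[\gamma] \mapsto (-1)^{\sum_j n_j(\gamma)}$, vanishes identically. For existence I would invoke the double covering construction from the preceding subsection: glue two copies of $\Omega \setminus \widehat{\Gamma}$ along $\widehat{\Gamma} \setminus \Xb$ with an interchange of sheets to obtain a space $\widetilde{\Omega}$. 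A small loop around any $X_j$ crosses $\widehat{\Gamma}$ exactly $\nu_j$ times, an odd number, so it flips sheets, matching $m$; this identifies $\widetilde{\Omega}$ with the cover of $\dot\Omega_\Xb$ associated to $\ker m$ (branched at $\Xb$). Since each copy of $\Omega \setminus \widehat{\Gamma}$ embeds as a connected subspace of $\widetilde{\Omega}$, every loop in $\Omega \setminus \widehat{\Gamma}$ lifts to a closed loop and therefore lies in $\ker m$; analytic continuation from a basepoint then produces a univalued regular square root of $\exp(\i\Phi_\Xb)$ on $\Omega \setminus \widehat{\Gamma}$.

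For maximality I would argue by contradiction. Suppose the square root $f$ extends univaluedly and regularly to an open $U$ with $\Omega \setminus \widehat{\Gamma} \subsetneq U \subset \Omega$, and pick $x_0 \in U \cap \widehat{\Gamma}$. If $x_0 = X_j$ for some $j$, then $f$ has a branch-point singularity there (a small loop around $X_j$ flips sign), contradicting univaluedness. If instead $x_0 \in \widehat{\Gamma} \setminus \Xb$, the connected component of $\widehat{\Gamma}$ through $x_0$ contains some $X_{j_0}$ by our refinement. Taking $B(x_0,\epsilon) \subset U$, I construct a loop $\gamma \subset U$ that leaves one side of the arc of $\widehat{\Gamma}$ at $x_0$, travels parallel to the component of $\widehat{\Gamma}$ until it makes a tight loop around $X_{j_0}$, and returns to the other side of the arc through $B(x_0,\epsilon)$. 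The loop $\gamma$ has winding number $\pm 1$ around $X_{j_0}$ and $0$ around the remaining $X_j$'s, so $m([\gamma]) = -1$, contradicting the univaluedness of the extended $f$.

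The main technical hurdle is the refinement of $\widehat{\Gamma}$ so that every component meets $\Xb$. One needs to argue that any component of a slitting set disjoint from $\Xb$ can be removed without breaking the slitting property. By the handshake lemma such a component has only $\partial\Omega$-odd endpoints, and in the simply connected setting it would necessarily disconnect $\Omega$ (a Jordan-curve-type argument), which is incompatible with $\Omega \setminus \widehat{\Gamma}$ being connected; hence such components do not arise. The case of odd holes requires incorporating loops around the holes into the parity count but is handled in the same spirit.
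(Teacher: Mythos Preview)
The paper does not give an explicit proof of this proposition; it simply states ``Then, once we have this slitting property, we have'' the result, treating it as immediate from Proposition~\ref{prop:slitting}. Your argument supplies the details the paper omits, and the overall strategy---existence via the parity homomorphism and the double cover, maximality by producing an odd loop once any point of $\widehat\Gamma$ is adjoined---is sound.

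Two comments on the execution. First, in the maximality step your claim that the constructed loop has winding number $\pm 1$ about $X_{j_0}$ and $0$ about the other poles is not obviously justified (the path ``parallel to the component of $\widehat\Gamma$'' may pass near other poles or branches). But you do not need this: you have already argued in the existence part that the parity $m([\gamma])$ equals $(-1)$ raised to the number of transverse crossings of $\widehat\Gamma$. Your loop crosses $\widehat\Gamma$ exactly once (at $x_0$), so $m([\gamma])=-1$ immediately, without any winding-number bookkeeping.

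Second, the refinement ensuring that every component of $\widehat\Gamma$ meets $\Xb$ is cleaner than your sketch suggests. If a component $C$ of $\widehat\Gamma$ contains no point of $\Xb$, then $X^{\mathrm{odd}}(\widehat\Gamma\setminus C)=X^{\mathrm{odd}}(\widehat\Gamma)=\Xb$ (removing a disjoint component does not alter vertex parities elsewhere), and $(\Omega\setminus\widehat\Gamma)\cup C$ is still connected since $C$ lies in the closure of the connected set $\Omega\setminus\widehat\Gamma$. Thus $\widehat\Gamma\setminus C$ again slits, and iterating removes all such components. This avoids the Jordan-curve casework.
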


It turns out that the statement in Proposition~\ref{prop:slitting} is has a natural formulation in terms of graph theory. This corresponds indeed in the nodal case to the notion of nodal graph (see for example~\cite[Subsection 3.1]{KKP}). We have a graph contained in $\overline\Omega \subset \mathbb R^2$. The boundary points and the singular points of $\Gamma$ are the vertices and the regular arcs of $\Gamma$ are the edges. The vertices of $\partial \Omega$  and the odd vertices (an odd number of edges arrive at the vertex) in $\Omega$ play a special role. We can also define the notion of odd hole by determining the parity of the number of edges arriving at the boundary of the hole. Moreover, $\widehat \Gamma$ can be considered as a subgraph of $\Gamma$. The graph translation of Proposition~\ref{prop:slitting} is:

\begin{lemma}
If $\Gamma$ is a graph in $\overline\Omega$ with given \enquote{odd} set of  vertices $X^{\mathrm{odd}}(\Gamma)= (X_1,\ldots, X_\ell)$ and given \enquote{odd}  holes, then there exists a subgraph $\widehat\Gamma$ with the same \enquote{odd}  sets such that $\Omega \setminus \widehat \Gamma$ is connected.
\end{lemma}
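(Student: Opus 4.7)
I would prove this combinatorial lemma via planar duality and a $2$-coloring argument on a spanning tree. First, the two required conditions admit clean reformulations in terms of $E_{\mathrm{rem}}:=E(\Gamma)\setminus E(\widehat\Gamma)$. The connectedness of $\Omega\setminus\widehat\Gamma$ is equivalent to $E_{\mathrm{rem}}$ being a spanning connected subgraph of the dual multigraph $G^*$ whose vertices are the faces $D_1,\dots,D_k$ of $\Omega\setminus\Gamma$ and whose edges are the arcs of $\Gamma$, each arc joining the two faces it separates. The preservation of the odd sets is equivalent to $E_{\mathrm{rem}}$ being an even subgraph of the auxiliary graph $\Gamma^+$ obtained from $\Gamma$ by collapsing the outer boundary of $\Omega$ to one vertex $v_\infty$ and each interior-boundary component (hole) to its own distinguished vertex $v_H$.

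Next I would exploit that, after these identifications, $\overline\Omega$ becomes topologically a $2$-sphere and $\Gamma^+$ is planar on it. Consequently its $\mathbb F_2$-cycle space is spanned by the face boundaries $\partial D_1,\dots,\partial D_k$ modulo the single relation $\sum_i\partial D_i=0$, and planar duality identifies this cycle space with the cocycle (edge-cut) space of $G^*$: for every nonempty proper $I\subsetneq\{1,\dots,k\}$, the dual edge-cut $(I,I^c)$ consisting of those arcs of $\Gamma$ that separate an $I$-face from a non-$I$-face is an even subgraph of $\Gamma^+$, and conversely every even subgraph of $\Gamma^+$ arises in this way.

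The crux is then to choose $I$ so that $(I,I^c)$ is in addition a spanning connected subgraph of $G^*$, and for this I would use the standard $2$-coloring of a tree. Pick any spanning tree $T^*$ of the connected graph $G^*$, color its vertices properly by distance-parity from an arbitrary root, and let $I$ be the set of even-colored vertices. Every edge of $T^*$ joins a vertex of $I$ to one of $I^c$, so $T^*\subseteq(I,I^c)$, and the cut therefore contains a spanning tree of $G^*$ and is in particular spanning and connected. Setting $E_{\mathrm{rem}}:=(I,I^c)$ and $\widehat\Gamma:=\Gamma\setminus E_{\mathrm{rem}}$ simultaneously achieves both desiderata: the evenness of $E_{\mathrm{rem}}$ at every vertex of $\Gamma^+$ preserves the parity at each interior critical point and at each $v_H$, so that $X^{\mathrm{odd}}(\widehat\Gamma)=X^{\mathrm{odd}}(\Gamma)$ and $\partial\Omega^{\mathrm{odd}}(\widehat\Gamma)=\partial\Omega^{\mathrm{odd}}(\Gamma)$, while $\Omega\setminus\widehat\Gamma$ is connected by the spanning property of $E_{\mathrm{rem}}$.

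I do not anticipate any serious obstacle: once the problem is recast via $\Gamma^+$ and its dual $G^*$, the $2$-coloring of a spanning tree produces the required cut \enquote{for free}. The only minor technical care concerns the hole handling -- each hole (odd or even) must contribute one distinguished vertex to $\Gamma^+$, so that the total arrival parity at the hole is automatically preserved by the evenness of $E_{\mathrm{rem}}$ at that vertex -- but this does not affect the planar-duality framework on the $2$-sphere.
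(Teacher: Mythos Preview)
Your proof is correct but takes a genuinely different route from the paper's. The paper argues directly on $\Gamma^+$ (after the same sphere identification you use): as long as $\Gamma^+$ contains a simple cycle, delete all edges of that cycle; this removes exactly two edges at each vertex on the cycle, so all vertex parities are preserved and no odd vertex can disappear. Iterating until no cycle remains leaves a forest $\widehat\Gamma$, whose complement in the sphere is automatically connected. Holes are handled exactly as you do, by collapsing each boundary component to a single vertex.

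Your argument instead passes to the dual graph $G^*$ and uses planar duality to identify the admissible removed sets $E_{\mathrm{rem}}$ (the even subgraphs of $\Gamma^+$) with the edge-cuts $\delta(I)$ of $G^*$; the $2$-coloring of a spanning tree of $G^*$ then produces a cut containing that tree, hence simultaneously even in $\Gamma^+$ and spanning-connected in $G^*$. The paper's approach is more elementary---no $\mathbb F_2$ cycle/cut spaces, no duality, just repeated cycle deletion---while yours is more structural and has a pleasant interpretation in the language of the paper: your bipartition $I$ is precisely a choice of signs $\epsilon_i\in\{\pm 1\}$ on the $D_i$'s (cf.\ Subsection~\ref{Section5}), and $\widehat\Gamma$ is the set of arcs across which the sign does \emph{not} flip. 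In particular your construction makes transparent that when $G^*$ is bipartite one may take a proper $2$-coloring and obtain $\widehat\Gamma=\emptyset$, recovering the nodal situation.
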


\begin{proof}[Proof (given by G.~Berkolaiko)]
We first consider the case with a simply connected domain $\Omega$. It is better to identify all the points of $\partial\Omega$ and to look a the new graph as a graph $\widetilde \Gamma$ on the sphere, $\partial \Omega$ being the north pole $P$ and $\Omega$ being the $\mathbb S_2\setminus \{P\}$. To get the connexity property, it is enough to destroy all the cycles on the graph. It is now enough to observe that if there is a cycle we can delete all the elements of the cycle. But at each vertex of the cycle, only two edges belonging to the cycle arrive. Hence when destroying a cycle, this always preserves the odd vertices and the even vertices. Finally, we observe that no odd vertex can disappear when deleting a cycle. This case is exemplified in Figure~\ref{fig:graphslitting1}.

\begin{figure}[htb]
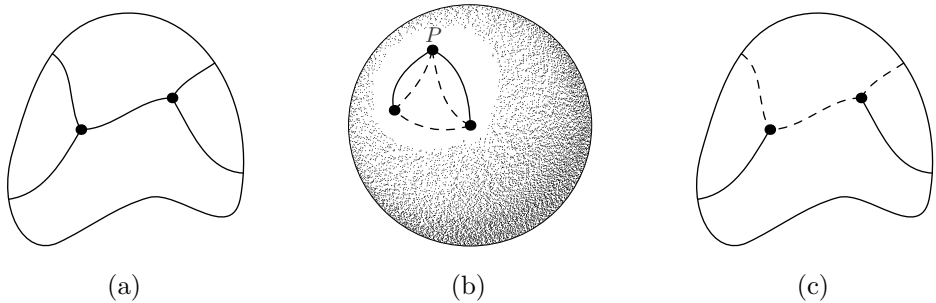

\centering
\makebox[\textwidth][c]{%
\subcaptionbox{\label{graphslitting1-a}}{\includegraphics[page=18]{figures.pdf}}
\hskip 1cm
\subcaptionbox{\label{graphslitting1-b}}{\includegraphics[page=19]{figures.pdf}}
\hskip 1cm
\subcaptionbox{\label{graphslitting1-c}}{\includegraphics[page=20]{figures.pdf}}
}
\caption{\subref{graphslitting1-a} A domain $\Omega$ with a partition.
\subref{graphslitting1-b} The constructed graph on the sphere. The boundary
of $\Omega$ is mapped to the point $P$. We remove one loop (dashed).
\subref{graphslitting1-c} Back in $\Omega$, the removed loop is dashed, and $\widehat\Gamma$ remains.}
\label{fig:graphslitting1}
\end{figure}

In the case with holes, we identify each component of $\partial \Omega$ with a  point and look at a new graph $\widetilde \Gamma$ on the sphere with $\Omega$ being $\mathbb S_2\setminus \{P_1,\dots, P_{m}\}$. Each $P_i$ corresponds to a component of $\partial \Omega$. Then it suffices to think in the previous proof that the points $P_\ell$ corresponding to odd boundary components are odd vertices. This situation is exemplified in Figure~\ref{fig:graphslitting2}.
\end{proof}

\begin{figure}[htb]
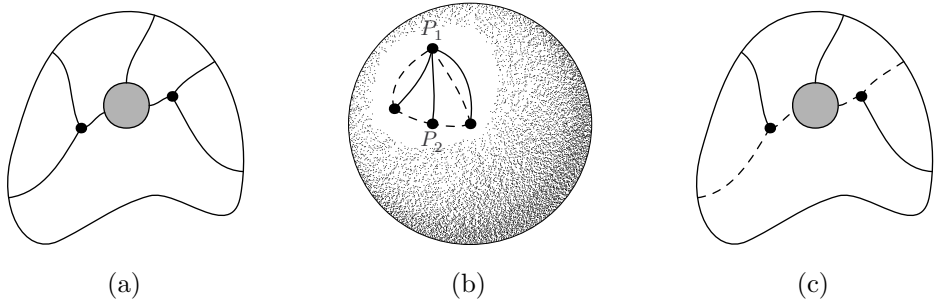

\centering
\makebox[\textwidth][c]{%
\subcaptionbox{\label{graphslitting2-a}}{\includegraphics[page=21]{figures.pdf}}
\hskip 1cm
\subcaptionbox{\label{graphslitting2-b}}{\includegraphics[page=22]{figures.pdf}}
\hskip 1cm
\subcaptionbox{\label{graphslitting2-c}}{\includegraphics[page=23]{figures.pdf}}
}
\caption{\subref{graphslitting2-a} A non-simply connected domain $\Omega$ with a partition.
\subref{graphslitting2-b} The corresponding graph on the sphere. 
The outer boundary is identified
at $P_1$ while the inner boundary is identified at $P_2$. We remove one loop (dashed).
\subref{graphslitting2-c} Back in $\Omega$, this means that we removed the
dashed curves.}
\label{fig:graphslitting2}
\end{figure}

\subsubsection{Proof of Theorem~\ref{thm:flowa}} With the slitting lemma at hand, we have a general method to define $\widehat\Gamma$, and then we can complete the proof in the general setting, by following what was done in the case of the Mercedes star. Thus, we introduce a Sobolev space associated with the pair $(\widehat \Gamma, \Gamma \setminus \widehat \Gamma)$. 

We note that $\widehat \Gamma$ is a union of regular curves $\hat \gamma_\ell$,  ending at critical points, and we can choose an orientation of $\hat\gamma_\ell$ so that, locally, in the neighborhood $D(x,r)$ of an interior point $x\in \hat\gamma_\ell$, we can write $D(x,r) \setminus \hat\gamma_\ell= D^+(x,r)\cup D^-(x,r)$ permitting to define a trace on the left and on the right.

Starting from $H^1(\Omega \setminus \widehat \Gamma)$, we introduce
\[
\widehat H_0^1(\Omega \setminus \widehat \Gamma):=\{ u\in H^1( \Omega \setminus \widehat \Gamma)\,,\, u|_{\hat \gamma_\ell^+} = - u|_{\hat \gamma_\ell^-}\,,\, u|_{\partial \Omega} =0\}\,.
\]
On this  Sobolev space $\widehat H_{0}^1(\Omega \setminus \hat \Gamma)$, we define the sesquilinear form
\[
\begin{aligned}
 (\hat u,\hat v)\mapsto \hat {\mathfrak t}_\sigma (\hat u, \hat v) 
 & = \sum_{i=1}^3 \int_{D_i} \nabla  \hat u  \cdot \overline{\nabla \hat  v} \,dx
 + \sigma \int_\Gamma \hat u\overline{\hat v}\,ds_\Gamma\\ 
 & = \int_{\Omega \setminus \widehat \Gamma} \nabla  \hat u  \cdot \overline{\nabla \hat  v}\, dx
  + \sigma \int_{\Gamma} \hat u\, \overline{\hat v}\,ds_\Gamma\,.
 \end{aligned}
\]
We can then associate, via the Lax--Milgram theorem, to this sesquilinear form a realization $\widehat T_{\Gamma,\widehat \Gamma,\sigma}$ of the Laplacian in $\Omega \setminus \widehat \Gamma$ with $\sigma$ transmission properties on $\Gamma \setminus \widehat \Gamma$, Dirichlet condition on $\partial \Omega$ and $\sigma$-Robin like condition on $\widehat\Gamma$. As in the Mercedes-star case, $\laplace_{\Omega \setminus \widehat \Gamma}$ corresponds to $\sigma=0$.

It remains to detail our definition of $\DN_{\Gamma,\widehat\Gamma}(\lambda)$. Following what we did in the Mercedes-star case, the only point is to have a clear definition of the operators $\widehat{R}_{\Gamma\shortto \partial D_i}$. We change signs where appropriate, and set
\[
  \widehat{R}_{\Gamma\shortto \partial D_i}h
  =
  \begin{cases}
    h & \text{on $\Gamma\setminus\widehat{\Gamma}_i^{-}$},\\
    -h & \text{on $\widehat{\Gamma}_i^-$},\\
    0 & \text{on $\partial\Omega\cap\partial D_i$}.
  \end{cases}  
\]
Here $\widehat{\Gamma}_i^{-}$ denotes the parts of $\widehat{\Gamma}$ where we have chosen the left trace as seen from $D_i$. The definition of $\DN_{\Gamma,\widehat\Gamma}(\lambda)$ is then given as in~\eqref{eq:DNslitting}, but with a sum from $1$ to $k$. The proof of Theorem~\ref{thm:flowa} is then completed in the same way as in~\cite{BeCoMa}.

\subsubsection{Comparison between two constructions}\label{sec:comparison}
If $\mathcal D$ is an equipartition with boundary set $\Gamma$, we can observe that $\Omega \setminus{\widehat\Gamma}$ is a bipartite equipartition. Moreover, if it satisfies the PCC, it is a nodal partition. Finally, if $\mathcal D$ is a minimal partition then it is a Courant sharp nodal partition in $\Omega \setminus \widehat \Gamma$ (see~\cite{BHV} where this argument is used for the analysis of the Hexagonal conjecture, see also Figure~\ref{fig:hexagon}). 
\begin{figure}[htb]
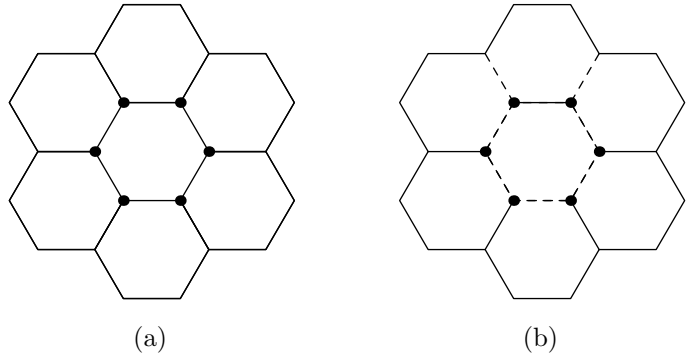

  \centering
  \subcaptionbox{\label{hexagon-a}}{\includegraphics[page=24]{figures.pdf}}
  \hskip 1cm
  \subcaptionbox{\label{hexagon-b}}{\includegraphics[page=17]{figures.pdf}}
  \caption{The slitting example $H_{12}^7$ of \cite[Figure~24]{BHV}.
  This is an example of a (conjectured) effective minimal $7$-partition of  $\Omega$, where $\Omega$ is the interior of the union of the closed seven hexagons, the minimal partition consisting of the seven hexagons. One can verify that it has the PCC property.}
  \label{fig:hexagon}
\end{figure}
It is then natural to compare our construction relative to $\mathcal D$ (seen as  a partition of $\Omega$) with the Berkolaiko--Cox--Marzuola construction associated with $\mathcal D$ seen as a nodal partition in $\Omega \setminus \widehat \Gamma$.  The difference is that in the second case, we restrict the first construction to  elements which vanish on $\widehat \Gamma$ and then project on $H^{-1/2}(\Gamma \setminus \widehat \Gamma)$. Coming back to the definitions,  it is then immediate to see that an eigenvalue of the Dirichlet to Neumann second operator is actually an eigenvalue of the first one. It is then interesting to compare the two formulas~\eqref{eq:5.1} and~\eqref{eq:BeCoMa} for the pair $(\Omega\setminus \widehat \Gamma,\Gamma\setminus \widehat \Gamma)$.

\section*{Acknowledgements}
The first author would like to thank G.~Berkolaiko for comments and his precious help in graph theory. We also thank T.~Hoffmann-Ostenhof and G.~Cox for useful comments.

\appendix
\section{Examples: Equipartitions of the unit circle}\label{ex:circle}
Even though we will consider domains in $\mathbb R^2$, we start by doing some calculations for the unit circle. We assume that $N$ is odd (even $N$ correspond to the nodal case) and consider $N$-equipartitions $\mathcal D$ (see Figure~\ref{fig:unitcircle} for the cases $N=3$ and $N=5$),
\[
k(\mathcal D)=N
\]
of the unit circle and the angular part of the Laplacian, $-\frac{d^2}{d\theta^2}$, with Dirichlet conditions at each sub-dividing point. Each interval have length $\Theta=2\pi/N$, and the smallest eigenvalue---the energy of the partition---is given by 
\[
  \Lambda(\mathcal D)=(N/2)^2.
\]

\begin{figure}[htb]
\centering
\subcaptionbox{\label{unitcircle-a}}{\includegraphics[page=1]{figures.pdf}}
\hskip 1cm
\subcaptionbox{\label{unitcircle-b}}{\includegraphics[page=2]{figures.pdf}}
\caption{The unit circle with \subref{unitcircle-a} the $3$-partition and \subref{unitcircle-b} the $5$-partition.}
\label{fig:unitcircle}
\end{figure}

The corresponding magnetic operator on the circle is given by
\[
T=-\Bigl(\frac d {d\theta} -\frac{\i\pi}{2}\Bigr)^2\,,
\]
and its spectrum consist of eigenvalues $\bigl\{\bigl(\frac{2n-1}{2}\bigr)^2\bigr\}_{n=1}^{+\infty}$, each with multiplicity two,
\[
\dim\ker\Bigl[T-\Bigl(\frac{2n-1}{2}\Bigr)^2\Bigr]=2.
\]
In particular, the minimal label $\ell(\mathcal D)$ of the eigenvalue $\Lambda(\mathcal D)=(N/2)^2$ is given by
\[
\ell(\mathcal D)=N.
\]
We are going to test the formula
\begin{equation}
\label{eq:proposedformula}
\ell(\mathcal D)-k(\mathcal D)=1-\dim\ker\bigr(T-\Lambda(\mathcal D)\bigl)+\Mor(\mathcal M_\lambda),
\end{equation}
where $\Mor(\mathcal M_\lambda)$ denotes the number of negative eigenvalues of a Dirichlet-to-Neumann operator $\mathcal M_\lambda$, discussed below. In fact, since we just saw that $\ell(\mathcal D)=N$, $k(\mathcal D)=N$, $\dim\ker\bigr(T-\Lambda(\mathcal D)\bigl)=2$, we need to check that
\[
  \Mor(\mathcal M_\lambda)=1.
\]
This is similar to the setting for Quantum graphs. In~\cite{FrGr} the number of negative eigenvalues of a Dirichlet-to-Neumann operator of a graph Laplacian corresponding to energy $\lambda$ is calculated as a difference between the number of eigenvalues of the corresponding Neumann and Dirichlet graph laplacians less than $\lambda$. But graphs with loops are excluded.

First we compute the Dirichlet-to-Neumann operator and the associated $2\times 2$ matrix $M_\lambda$ which associates with the solution $u$ of 
\[
-\frac{d^2}{d\theta^2} u =\lambda u\,,\, u(0)=u_0 \,,\,u(\Theta)=u_1\,,
\]
the pair 
\[(v_0,v_1)=  (-u'(0), u'(\Theta))\,.
\]
This leads to
\[
\begin{bmatrix}
v_0\\
v_1
\end{bmatrix}
=
M_\lambda
\begin{bmatrix}
u_0\\
u_1
\end{bmatrix},
\]
where $M_\lambda$ is the matrix
\[
 M_\lambda = 
\begin{bmatrix} 
\sqrt{\lambda}\cot(\sqrt{\lambda} \Theta)  & -\dfrac{\sqrt{\lambda}}{\sin (\sqrt{\lambda}\Theta)}\\ 
-\dfrac{\sqrt{\lambda}}{\sin (\sqrt{\lambda}\Theta)}  &  \sqrt{\lambda} \cot (\sqrt{\lambda}\Theta)
\end{bmatrix}
= 
 \begin{bmatrix}
\alpha (\lambda)  & \beta(\lambda) \\
\beta(\lambda) &  \alpha(\lambda)
\end{bmatrix},
\]
and $\alpha(\lambda)$ and $\beta(\lambda)$ are defined via the equation above. We continue in the same way along the circle. With $(u_k,u_{k+1})=(u(k\Theta),u((k+1)\Theta))$ and $(v_k,v_{k+1})=(-u'(k\Theta),u'((k+1)\Theta))$, we find that
\[
\begin{bmatrix}
v_k\\
v_{k+1}
\end{bmatrix}
=
M_\lambda
\begin{bmatrix}
u_k\\
u_{k+1}
\end{bmatrix},\qquad 0\leq k\leq N-1.
\]
But when we come to $(u_N,v_N)$ we have walked around the circle, and are back at the point we started. We replace $(u_N,v_N)$ by $(-u_0,-v_0)$. Thus, we find that the $N\times N$ matrix $\mathcal M_\lambda$, that associates with $(u_0,u_1,\ldots,u_{N-1})$ the $N$-tuple $(v_0,v_1,\ldots,v_{N-1})$, is given by
\[
 \mathcal M_\lambda := 
\frac12
\begin{bmatrix}
2 \alpha (\lambda) &\beta(\lambda)     &              0  & 0                & \cdots         &-\beta(\lambda) \\
\beta(\lambda)     &2\alpha(\lambda)   & \beta(\lambda)  & 0                & \cdots         & 0 \\
0                  & \beta(\lambda)    &2\alpha(\lambda) & \beta(\lambda)   & \cdots         & 0 \\
0                  & 0                 & \beta(\lambda)  & 2\alpha(\lambda) & \cdots         & \vdots \\
\vdots             & \vdots            & \vdots          & \ddots           & \ddots         & \beta(\lambda) \\
- \beta(\lambda)   & 0                 & 0               & \cdots           & \beta(\lambda) & 2 \alpha(\lambda) 
\end{bmatrix}\,.
\]
Thus, $\mathcal M_\lambda$ has $\alpha$ on the main diagonal, $\beta/2$ on the two subdiagonals, and $-\beta/2$ in the corners $(1,N)$ and $(N,1)$. The eigenvalues of $\mathcal M_\lambda$ are given by
\begin{equation} \label{defmuk}
\mu_k = \alpha - \beta \cos (2k \pi/ N)\,,\,k=0,\dots, N-1\,.
\end{equation}
Hence the lowest one is $\mu_0=\alpha(\lambda) - \beta(\lambda)$, and this eigenvalue is negative if $\sqrt{\lambda}=N/2+\epsilon$, with $\epsilon>0$ sufficiently small.
 
To analyze the positivity of the other eigenvalues, it suffices to study the sign of $\mu_1$. After division by $\beta(\lambda)$, we need to consider the sign of
\[
\delta_1:=  - \cos (2\pi \sqrt{\lambda}  /N)- \cos (2\pi / N).
\]
If we take $\sqrt{\lambda} = N/2 +\epsilon$, we have $\delta_1(\epsilon) = \cos ( 2 \pi \epsilon/N) - \cos (2\pi / N) >0$ for $\epsilon >0$ small enough.

We conclude that if $\sqrt{\lambda}=N/2+\epsilon$, with $\epsilon>0$ sufficiently small, then the matrix $\mathcal M_\lambda$ has exactly $1$ negative eigenvalue. This means that Formula~\eqref{eq:proposedformula} is indeed  true.

Finally, we continue the discussion in Section~\ref{sec:comparison}. Now $\widehat \Gamma$ is just one point, say $\theta=0$. The construction in~\cite{BeCoMa} leads to an $(N-1)\times (N-1)$ matrix obtained by taking $u_0=0$ and forgetting $v_0$. This leads to the matrix
\[
 \mathcal M_\lambda^0 := 
\frac12
\begin{bmatrix}
2\alpha(\lambda)   & \beta(\lambda)  & 0                & \cdots         & 0 \\
 \beta(\lambda)    &2\alpha(\lambda) & \beta(\lambda)   & \cdots         & 0 \\
0                 & \beta(\lambda)  & 2\alpha(\lambda) & \cdots         & \vdots \\
 \vdots            & \vdots          & \ddots           & \ddots         & \beta(\lambda) \\
0                 & 0               & \cdots           & \beta(\lambda) & 2 \alpha(\lambda) 
\end{bmatrix}\,.
\]
whose spectrum is given by
\[
\Bigl\{\alpha(\lambda)+\beta(\lambda)\cos\frac{k\pi}{N}\Bigr\}_{k=1}^{N-1}.
\]
Hence $ \mathcal M_\lambda^0 $ has the same eigenvalues as $\mathcal M_\lambda$ except $\alpha -\beta$. All its eigenvalues are positive. Again we can verify for the energy  $(N/2)^2$ in this case  that \eqref{eq:BeCoMa} holds with $\Omega= S^1 \setminus \{0\}$ and the same partition as in  the previous case.

\begin{remark} 
A more general situation in one dimension, corresponding to an interval, is 
analyzed in~\cite{BBCCM}.
\end{remark}

\end{document}